\newtheorem{definition}{Definition}[section]
\newtheorem{lemma}[definition]{Lemma}
\newtheorem{assumption}[definition]{Assumption}
\newtheorem{Theorem}[definition]{Theorem}
\newtheorem{corollary}[definition]{Corollary}
\newtheorem{remark}[definition]{Remark}
\DeclareMathAlphabet\mathbit
\DeclareOldFontCommand{\bi}{\normalfont\bfseries\itshape}{\mathbit}
\newcommand{\cK}{\ensuremath{\mathcal K}\xspace}
\newcommand{\cG}{\ensuremath{\mathcal G}\xspace}
\newcommand{\cV}{\ensuremath{\mathcal V}\xspace}
\newcommand{\cW}{\ensuremath{\mathcal W}\xspace}
\newcommand{\be}{\begin{equation}}
    \newcommand{\ee}{\end{equation}}
\def\fakebold#1{\relax\ifvmode\leavevmode\fi%
    \ifmmode%
    \setbox0=\hbox{$#1$}%
    \else%
    \setbox0=\hbox{#1}%
    \fi%
    \kern-.02em\copy0 \kern-\wd0%
    \kern .04em\copy0 \kern-\wd0%
    \kern-.0125em\raise.02em\box0%
}%
\renewcommand{\geq}{\geqslant}
\renewcommand{\leq}{\leqslant}
\begin{document}
    
    \title[Adaptive modelling of variably saturated seepage problems] 
    {Adaptive modelling of variably saturated seepage problems}

    
    \author[b.~ashby \etal] {B. Ashby$^*$, C. Bortolozo$^\dagger$, A. 
    Lukyanov$^{*,\mathsection}$ \and T. Pryer$\ddagger$}
    
    \address{$^*$Department of Mathematics and Statistics, University of 
    Reading, Reading, UK,
        \\
        $^\dagger$CEMADEN - National Center for Monitoring and Early Warning 
        of Natural Disasters, 
        General Coordination of Research and Development, S\~ao Jos\'e dos 
        Campos, Brazil
        \\
        $^\mathsection$P.N. Lebedev Physical Institute of the Russian Academy 
        of Sciences, Moscow 119991, 
        Russia
        \\
        $^\ddagger$Department of Mathematical Sciences, University of Bath, 
        Bath, UK.
    }
    
    \received{\recd --}

    \maketitle
    
    \eqnobysec
    
   \emph{ This article has been accepted for publication in The Quarterly 
   Journal of Mechanics and Applied Mathematics Published by Oxford 
   University Press.}
    
    \begin{abstract}
        {In this article we present a goal-oriented adaptive finite element
            method for a class of subsurface flow problems in porous media,
            which exhibit seepage faces. We focus on a representative case of
            the steady state flows governed by a nonlinear Darcy-Buckingham law
            with physical constraints on subsurface-atmosphere boundaries. This
            leads to the formulation of the problem as a variational
            inequality. The solutions to this problem are investigated using an
            adaptive finite element method based on a dual-weighted a posteriori
            error estimate, derived with the aim of reducing error in a specific
            target quantity.  The quantity of interest is chosen as volumetric
            water flux across the seepage face, and therefore depends on an a
            priori unknown free boundary.  We apply our method to challenging
            numerical examples as well as specific case studies, from which this
            research originates, illustrating the major difficulties that arise
            in practical situations. We summarise extensive numerical results
            that clearly demonstrate the designed method produces rapid error
            reduction measured against the number of degrees of freedom.}
    \end{abstract}

    \section{Introduction}
    
    The modelling of subsurface flows in porous media presents a multitude
    of mathematical and numerical challenges.  Heterogeneity in soils and
    rocks as well as sharp changes of several orders of magnitude in
    hydraulic properties around saturation are the multi-scale phenomena that
    are particularly difficult to capture in numerical models.  In
    addition, physically realistic domains include a wide variety of
    boundary conditions, some of which depend upon a free (phreatic)
    surface and therefore also upon the problem solution itself. These
    boundary conditions are described by inequality constraints. At points
    where the active constraint switches from one to the other, gradient
    singularities in the solution can arise which must be resolved well to
    avoid polluting the accuracy of the solution. The situation is
    analogous to a thin obstacle problem, for which gradient
    discontinuities arise around the thin obstacle
    \cite{milakis2008regularity}. For these reasons, such problems are
    good candidates for $h$-adaptive numerical methods, where a
    computational mesh is automatically refined according to an indicator
    for the numerical error.  It is the aim of such methods to provide the
    necessary spatial resolution with greater efficiency than is possible
    with structured meshes.
    
    A common model for steady flow in porous media in the geosciences is a
    free surface problem where the medium is assumed to be either
    saturated with flow governed by Darcy's law or dry with no flow at
    all. The free surface is the boundary between the two regions with a
    no-flow condition applied across it. Some authors solve this as a pure
    free boundary problem where the computational domain is unknown a
    priori such as in \cite{darbandi2007moving_phreatic_surface}. However, 
    this means that as the domain is updated, expensive re-meshing must 
    take place, allowing fewer of the data structures to be re-used from one 
    iteration
    to the next. To
    avoid the difficulties of this
    approach, in \cite{brezis1978nouvelle}, the problem formulation is
    modified to a fixed domain in which flow can take place (such as a
    dam) and the pressure variable defined on the whole domain, removing
    the need for changes in problem geometry and costly re-meshing during
    numerical simulations. The theory of this type of formulation is
    described in detail in \cite{oden1980theory}. A good approximation
    theory is available for finite element methods applied to such
    problems. It should be noted though that this model is a
    simplification, owing to the fact that it does not allow for
    unsaturated effects.
    
    To avoid the computational complexities of a changing domain, in this work 
    we consider the porous 
    medium to be variably saturated, and therefore we solve for pore pressure 
    over the entire domain 
    (cf \cite{zhang2001seepage_unified}). The results presented in 
    \cite{rulon1985development} 
    suggest that this approach is in fact necessary to accurately represent the 
    subsurface. It is also 
    expected that this framework will allow relatively easy extension to 
    unsteady cases where
    unsaturated effects are extremely important for the dynamics. 
    
    Although there has been much study of this problem, there are
    relatively few examples of adaptive finite element techniques being
    used. This is because the partial differential equation governing
    subsurface flow presents difficulties for the traditional theory of a
    posteriori estimation. This stems from the behaviour of the
    coefficient of hydraulic conductivity, which depends on the solution
    itself and approaches zero in the dry soil limit, leading to
    degeneracy of the PDE problem. This violates the standard assumption
    of stability in elliptic PDE problems.

    In an early work on the approximation of solutions to variational
    inequalities by the finite element method, Falk \cite{falk1974error}
    derives an a priori error estimate for linear finite elements on a
    triangular mesh when $N=2$ with $k(u) \equiv 1$, providing optimal
    convergence rates in the $H^1$-norm. The author also remarks that due
    to the relatively low regularity of the solution, higher order
    numerical methods can not provide a better rate. In situations such as this,
    local mesh refinement comes into its own.

    Traditional a posteriori estimation for finite element methods gives upper 
    bounds of the form 
    
    \begin{equation}
        \norm{u - u_h}_E 
        \leq 
        C \varepsilon (u_h, h, f)
    \end{equation}
    
    \noindent
    where $u$ is the exact solution to some partial differential equation, $C$ is a 
    positive constant, 
    $u_h$ 
    is the numerical solution, $h$ is the mesh function and $f$ 
    is problem data. $C$ is usually only computable for the simplest domains 
    and meshes, and can be 
    large. 
    The norm is usually an energy norm: a global measure chosen so that the 
    asymptotic 
    convergence rate of the method is optimal. In practical computations, 
    however, the user is often 
    not 
    interested in asymptotic rates that may never be reached, but would prefer 
    a sharp estimate of the 
    error to give confidence in the approximation. 
    
    The dual-weighted residual framework for error estimation was inspired by 
    ideas from optimal 
    control as a means to estimate the error in approximating a general quantity 
    of interest. Pursuing 
    this analogy, the objective functional to be minimised is the error in 
    numerically approximating a 
    solution to the PDE problem, the constraints are the PDE problem and 
    boundary conditions, and the 
    control variables are local resolution in the spatial discretisation.  
    
    There has been a huge amount of work on error estimation and adaptivity 
    using the dual-weighted 
    approach and it has shown to be extremely effective in computing quantities 
    which depend upon 
    local 
    features in steady-state problems in \cite{hartmann2002adaptive},  
    heterogeneous media 
    \cite{HoustonPorous2015goal} and variable boundary conditions in 
    variational inequalities
    \cite{Blum2000,suttmeier2008numerical}. In almost all cases the 
    performance of the goal based 
    algorithm cannot be 
    bettered in 
    efficiency. The goal-based framework also extends to time dependent 
    problems, where it has been 
    applied to the heat equation by \cite{parabolic_dual_vanderzee} and the 
    acoustic wave equation 
    by \cite{bangerth1999finite} among others.

    A common feature of numerical methods for seepage problems in the 
    literature is that they are 
    designed around getting a good representation of the phreatic surface, 
    namely the level set of zero 
    pressure head that divides saturated from unsaturated soil. There are 
    however many other possible 
    quantities of interest such as flow rate over a seepage face that could 
    represent the productivity of 
    a well.  In this work, correct representation of the phreatic surface is 
    prioritised only if it is 
    important for the calculation of the quantity of interest, and we let local 
    mesh 
    refinement do the work for us, rather than expensive re-meshing of the free 
    surface. Indeed, in the 
    current framework, mesh refinement is rather simple to implement and 
    relatively cheap.
    
    The dual-weighted residual method has been applied to linear problems with 
    similar characteristics. 
    In 
    \cite{Blum2000}, a simplified version of the Signorini problem is solved. 
    The authors of \cite{HoustonPorous2015goal} consider a groundwater flow 
    problem in which the 
    focus is to 
    estimate the error in the nonlinear travel time functional. In both cases, the 
    underlying PDE 
    operator is linear.
    
    The key step in deriving an a posteriori error bound for this variational 
    inequality is the introduction 
    of an intermediate function that solves the unrestricted PDE corresponding 
    to the inequality. This 
    allows the removal of the exact solution from the resulting bound. Finally, 
    the unrestricted solution 
    allows the problem data to enter into the problem, allowing a fully 
    computable a posteriori error 
    bound. In this paper, we apply these cutting edge techniques of a posteriori 
    error estimation and 
    adaptive computing to complex and relevant problems informed by 
    geophysical applications. We 
    demonstrate that the error bound is sharp and allows for highly efficient 
    error reduction in the target 
    quantity in a variety of situations which include geometric singularities, 
    multi scale effects in layered 
    media and complex boundary conditions at the seepage face.
    
    The remainder of the paper is set out as follows. In section
    \ref{sec:desciption_of_problem}, we describe the seepage problem and
    derive a weak formulation. The problem is discretised with a finite
    element method in section \ref{sec:fem}. Section \ref{sec:apost} is
    devoted to the derivation of a dual-weighted a posteriori estimate for
    the finite element error. Sections \ref{sec:implementation} and
    \ref{sec:algorithm} describe the particulars of the adaptive algorithm
    and our implementation of it. Section \ref{sec:numerics} contains
    numerical experiments, to illustrate the performance of the error
    estimate and adaptive routine in two test cases. Finally, section
    \ref{sec:case} contains the application of our adaptive routine to two
    case studies with experimental data chosen to illustrate some of the
    most difficult cases that arise in practice.

    \section{Description of Problem} \label{sec:desciption_of_problem}
    
    In this section, we give the mathematical formulation of the seepage
    problem and derive its weak form. Let $u$ denote the pressure head of
    fluid flowing in a porous medium in a bounded, convex domain $\Omega
    \subseteq \mathbb{R}^N$, $N=2$ or $3$ with boundary $\partial \W$. The
    flow of the fluid is described by the flux density vector
    $\mathbf{q}(u)$. Note that $\mathbf{q}(u)$ is not the fluid velocity
    $\mathbf{v}$, but is related to it by
    
    \begin{equation}\label{velocity}
        \mathbf{v} = \frac{\mathbf{q}(u)}{\phi},
    \end{equation} 
    
    \noindent
    where $\phi$ is the porosity of the medium, that is, the proportion of the 
    medium that may be 
    occupied by fluid. Flux density is related to the pressure field by
    \begin{equation} \label{Darcy_law}
        \mathbf{q}(u)
        :=
        - k(u) \nabla \left(u + h_z\right),
    \end{equation}
    
    \noindent
    where $h_z$ is the vertical height above a fixed datum representing
    the action of gravity upon the fluid and $k$ is a nonlinear function
    that characterises the hydraulic conductivity of the medium. We
    refrain from precisely writing $k$ here as our analytic results only
    require quite abstract assumptions on the specific form of $k$,
    however, for our practical tests, we will always have in mind that $k$
    is of van Genuchten type \cite{VanGenuch80}, compare with (\ref{k})
    and Figure \ref{fig:soil_parameters}. The modification of Darcy's law
    following the observation that hydraulic conductivity depends upon the
    capillary potential $u$ is due to \cite{buckingham1907studies}, and is
    a generalisation of the standard Darcy law that applies to soil that
    is completely saturated. In this case, the coefficient $k$ introduces
    strong nonlinearity into the problem.

    Now consider the steady state and suppose that $f$ is a source/sink
    term. Then we can combine \eqref{Darcy_law} with the mass balance
    equation
    
    \begin{equation} \label{conservation_of_mass}
        \nabla \cdot \mathbf{q}(u)
        =
        f
    \end{equation}
    
    \noindent
    to obtain the equation of motion for steady-state variably saturated flow
    
    \begin{equation}
        - \nabla \cdot k(u)\nabla (u + h_z)
        =
        f.
    \end{equation}
    
    To complete the above system and solve it, boundary conditions must be
    specified. We briefly review the most relevant here and point an
    interested reader to \cite{Beardynamics} for a more complete list.
    
    \emph{Boundaries that are in contact with a body of water} can be
    modelled by enforcing a Dirichlet boundary condition $u = g$,
    where $g$ is some function chosen based upon the assumption that
    the body has a hydrostatic pressure distribution. The boundary
    condition therefore enforces continuity of pressure head across
    the boundary. A hydrostatic condition can also be used to set the
    water table, and can represent the prevailing conditions far from
    the soil-air boundary.
    
    \emph{The flow of water across a boundary} is given by the component
    of the Darcy flux, \eqref{Darcy_law}, that is normal to the
    boundary. We will set $\mathbf{q}(u) \cdot \mathbf{n} = 0$ where
    $\mathbf{n}$ is the unit outward normal vector to $\partial \W$ to
    represent an impermeable boundary.
    
    \emph{At subsurface-air boundaries}, a set of inequality constraints
    must be satisfied. The pressure of water in the soil at such a
    boundary can not exceed that of the atmosphere, and when this pressure
    is reached, water is forced out of the soil, creating a flux out of
    the domain. The portion of a subsurface-air boundary at which there is
    outward flux is known as a seepage face, and it is characterised by
    the following conditions:
    
    \begin{equation} \label{seepage_face_condition}
        u \leq 0, \quad \mathbf{q}(u) \cdot \mathbf{n} \geq 0, \quad u 
        (\mathbf{q}(u) \cdot \mathbf{n}) = 0.
    \end{equation}

    \noindent
    We define the contact set to be the portion of the boundary along
    which the constraint $u \leq 0$ is active which is precisely the
    seepage face
    \begin{equation} \label{contact_set}
        B 
        :=
        \{ x \in \Gamma_A \mid u(x) = 0 \}.
    \end{equation}
    
    We are now ready to state the full problem. We divide the boundary of
    $\W$, $\partial\W$, into $\Gamma_A$, $\Gamma_N$ and $\Gamma_D$ 
    such
    that $\overline{\partial\W}
    =\overline{\Gamma_A}\cup\overline{\Gamma_N}\cup\overline{\Gamma_D}$.
     Here
    $\Gamma_A$ stands for the portion of the boundary at which a seepage
    face may form and $\Gamma_N$ and $\Gamma_D$ respectively denote
    portions of the boundary where it is known a priori that Neumann
    (respectively Dirichlet) boundary conditions are to be applied. The
    problem is to find $u$ such that

    \begin{IEEEeqnarray}{rC'l.l} 
        \nabla \cdot \mathbf{q}(u)
        :=
        - \nabla \cdot k(u)\nabla (u + h_z)
        &=
        f
        &\text{in} &\W  \label{darcy}
        \\
        \mathbf{q}(u) \cdot \mathbf{n} 
        & =
        0
        &\text{on} &\Gamma_N \label{flux_bc}
        \\
        u
        &=
        g
        &\text{on} &\Gamma_D \label{dirichlet_bc}
        \\
        u
        \leq
        0, \quad
        \mathbf{q}(u) \cdot \mathbf{n} 
        \geq
        0, \quad
        u \left( \mathbf{q}(u) \cdot \mathbf{n} \right)
        &=
        0
        &\text{on}\,\,& \Gamma_A, \label{seepage_face}
    \end{IEEEeqnarray}
    
    \noindent
    where $ f $ denotes a source/sink and $g = g(z)$ is an affine function
    representing hydrostatic pressure. We refer to
    figure \ref{schematic} for a visual explanation.
    
    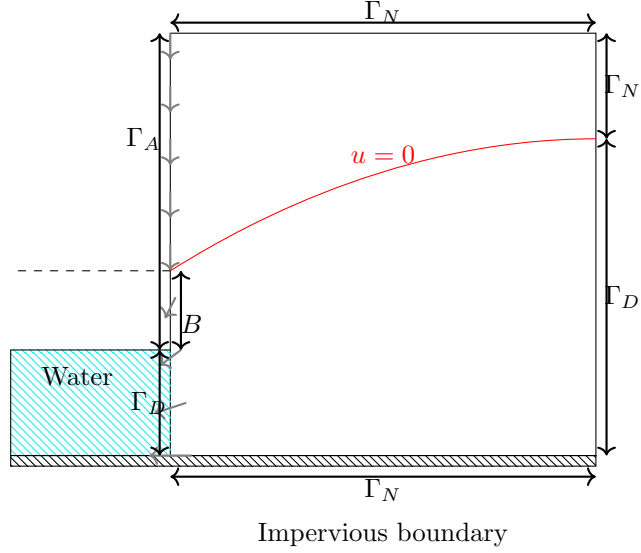
\begin{figure}[h!]
        \begin{center}
            \begin{tikzpicture}[scale=0.7] 
                \draw (0,0) -- (8,0) -- (8,8) -- (0,8) -- (0,0) -- cycle; 
                \draw [pattern=north west lines, pattern color=black] (-3,0) -- (8, 0) -- 
                (8, -0.2) -- (-3,-0.2) -- 
                cycle;
                \draw [pattern=north west lines, pattern color=cyan] (0,0) -- (-3, 0) -- 
                (-3, 2) -- (0,2);
                \draw[thick,gray,->] (0,8) -- (0,7.5);
                \draw[thick,gray,->] (0,7) -- (0,6.5);
                \draw[thick,gray,->] (0,6) -- (0,5.5);
                \draw[thick,gray,->] (0,5) -- (0,4.5);
                \draw[thick,gray,->] (0,4) -- (0,3.5);
                \draw[thick,gray,->] (0.1,3) -- (-0.1,2.6);
                \draw[thick,gray,->] (0.2,2) -- (-0.2,1.7);
                \draw[thick,gray,->] (0.3,1) -- (-0.3,0.8);
                \draw[thick,gray,->] (0.4,0) -- (-0.4,0.);
                \draw[thick,<->] (-0.2,2) -- (-0.2,8);
                \node at (-0.5,6) {$\Gamma_A$};
                \draw[dashed,-] (0,3.5) -- (-3,3.5);
                \draw[thick,<->] (0.2,2) -- (.2,3.5);
                \node at (0.4,2.5) {$B$};
                \draw[thick,<->] (-.2,2) -- (-.2,0);
                \node at (-.4,1) {$\Gamma_D$};
                \node[text=black] at (-1.75,1.5) {Water};
                \draw [red] (8, 6) parabola (0,3.5);
                \node[text=red] at (4, 5.7) {$u = 0$};
                \draw[thick,<->] (0.,-.4) -- (8.,-.4);
                \node at (4,-0.65) {$\Gamma_N$};
                \draw[thick,<->] (8.2,8) -- (8.2,6);
                \node at (8.5,7) {$\Gamma_N$};
                \draw[thick,<->] (8.2,6) -- (8.2,0);
                \node at (8.5,3) {$\Gamma_D$};
                \node at (4,-1.5) {Impervious boundary};
                \draw[thick,<->] (0.,8.2) -- (8.,8.2);
                \node at (4,8.4) {$\Gamma_N$};
            \end{tikzpicture}
        \end{center}
        \caption{A typical seepage problem. The upper part of the left lateral 
        boundary is in contact with 
            the atmosphere, while the lower part is underwater. The height at which 
            the level set $u=0$ meets 
            the boundary (marked with a dashed line) is a key unknown in seepage 
            problems.}
        \label{schematic}
    \end{figure}

    \subsection{Weak Formulation}
    
    In this section, we write the seepage problem \eqref{darcy} -
    \eqref{seepage_face} in weak form.  To that end, let $L^2(\W)$ be the
    space of square Lebesgue integrable functions defined on
    $\W$. Further, let $H^k(\W)$ be the space of functions whose weak
    derivatives up to and including order $k$ are also $L^2(\W)$. We then
    define the following function spaces:
    
    \begin{align}
        \cV^g &= \{v \in H^1(\W) \mid v = g \,\, \text{on}\,\, \Gamma_D \} \\
        \cK^g &=  \{v \in \cV^{g} \mid v \leq 0 \,\, \text{on}\,\, \Gamma_A \},
    \end{align}
    where boundary values are to be understood in the trace sense. Let $A$
    be a measurable subset of the domain $\W$, $v,w\in L^2(\W)$, then we
    write
    \begin{equation}
        (v \,,  w )_A := \int_A v\, w \diff x
    \end{equation} 
    as the $L^2(A)$ inner product. If the inner product is over $\W$, we
    drop the subscript and if $A$ is a subset of the boundary $\partial
    \W$, we interpret $(v \,, w )_A$ as a line integral.  
    
    We seek a weak solution $u \in \cK^g$ satisfying \eqref{darcy} -
    \eqref{seepage_face}. To that end, multiplying \eqref{darcy} by a test
    function $v \in \cK^0$ and integrating by parts, taking into account
    \eqref{flux_bc} gives
    
    \begin{equation}\label{weak_form}
        (\mathbf{q}(u), \,  \mathbf{n}\,v  )_{\Gamma_A}
        -
        ( \mathbf{q}(u), \, \nabla v  ) 
        =
        (f,  \, v )
        \quad \forall v \in \cK^0.
    \end{equation}
    
    \noindent
    By the boundary conditions and the definition of the space $\cK^0$,
    the boundary integral is negative so that \eqref{weak_form} can be
    written as:
    
    \begin{equation}\label{variational_inequality}
        (-\mathbf{q}(u),  \nabla v )
        \geq 
        ( f, \, v) 
        \quad \forall v \in \cK^0.
    \end{equation}
    
    \noindent
    We now extend the boundary data $g$ to a function $\bar{g} \in \cK^g$
    by insisting that $\bar{g} \equiv 0$ on $\Gamma_A$. We will address
    the choice of function $\bar{g}$ in Remark \ref{sec:barg} but for now it is
    sufficient to assume such a choice with this property exists. We may
    therefore set $ v = u-\bar{g} \in \cK^0$ in \eqref{weak_form} to give
    
    \begin{equation} \label{energy}
        (\mathbf{q}(u) , \mathbf{n}\,(u-\bar{g}))_{\Gamma_A} 
        -
        (\mathbf{q}(u), \,  \nabla (u-\bar{g})  )  
        =
        ( f, \, u-\bar{g}).
    \end{equation}
    
    \noindent
    Note that by \eqref{seepage_face} and the fact that $\bar{g}$ vanishes
    on $\Gamma_A$, the second term on the left hand side of \eqref{energy}
    is zero.  This result can be subtracted from
    \eqref{variational_inequality} to obtain the variational inequality in
    the standard and more compact form for such problems. The problem is
    then to seek $u\in\cK^g$ such that
    \begin{equation} \label{inequality_in_standard_form}
        \left(- \mathbf{q}(u), \nabla (v + \bar{g} - u) \right)
        \geq
        (f, v +\bar{g} - u)
        \quad \forall v \in \cK^0.
    \end{equation}
    
    In the seminal paper \cite{lions1967variational},
    existence and uniqueness of solutions is proved for problem
    \eqref{inequality_in_standard_form} in the case where $k(u) \equiv 1$,
    see also \cite{kinderlehrer1980introduction}. This is extendable to
    monotone nonlinear operators, however note the coefficient $k$ that
    parametrises the soil properties is often such that the operator does
    not satisfy this assumption, compare with Figure
    \ref{fig:soil_parameters}, although it can be regularised to mitigate
    this, as is done in for example \cite{BAUSE2004565}.
    
    In the case $k(u) \equiv 1$, the regularity result $u \in H^2(\W)$ is 
    established 
    \cite{brezis1978nouvelle}. To the author's
    knowledge, no such result is available for van Genuchten type
    nonlinearities. Indeed, our numerical results indicate this cannot be
    the case as the problem lacks regularity around the boundary of the
    contact set, shown in figure \ref{schematic} as the boundary between
    $B$ and $\Gamma_A \backslash B$.

    \section{Finite Element Method} \label{sec:fem}
    
    In this section, we introduce a finite element method to discretise
    \eqref{inequality_in_standard_form}. Let us assume that the domain
    $\W$ is polyhedral. Then we can define an exact subdivision of $\W$
    into a finite collection $\mathcal{T}$ of polygonal elements
    satisfying \cite[\S2]{ciarlet2002finite}.
    
    \begin{enumerate}
        \item 
        $K \in \mathcal{T}$ is an open simplex or open box, for example for 
        $N=2$, the mesh 
        would consist of triangles or quadrilaterals;
        \item 
        Two distinct elements intersect in a common vertex, a common edge or 
        not at all ($N=2$), and a 
        common vertex, edge or face or not at all ($N=3$);
        \item 
        $ \cup_{K \in \mathcal{T}} \overline{K} = \overline{\W}$.
    \end{enumerate} 
    
    \noindent
    We assume in addition that $\Gamma_A$ aligns with the mesh in the sense 
    that for all $K \in 
    \mathcal{T}$, $\partial K \cap \partial \W$ is either fully contained in 
    $\Gamma_A$ or else 
    intersects $\Gamma_A$ in at most one point ($N=2$) or one edge ($N=3$). 
    We make a similar assumption on elements lying on $\Gamma_D$. 
    For this choice of $\mathcal{T}$ we define the space
    \begin{equation}
        \cV^g_h
        =
        \{ v \in \cV^g \mid v \, \text{has total degree 1 on each} \, K \in 
        \mathcal{T} \}
    \end{equation}
    and the discrete subset
    \begin{equation}
        \cK^g_h
        =
        \{ v \in \cV^g_h \mid v \leq 0  \text{  on  } \Gamma_A \}.
    \end{equation}
    
    \noindent
    Note that for triangles or quadrilaterals when $N=2$ and tetrahedra
    and hexahedra when $N=3$, since a function $v_h \in \cK^0_h$ is linear
    along an element edge it is fully determined by its nodal values, that
    is, the set $\{ v_h(x) \mid x \,\text{ is a vertex of} \,\,
    \mathcal{T} \}$. Further, by the assumption that $\mathcal{T}$ aligns
    with $\Gamma_A$, it is enough to enforce $v_h(x) \leq 0 $ at this
    finite collection of points. This is not necessarily true for higher
    order finite elements, and for this reason we restrict our attention
    to those of total degree $1$.
    
    \begin{remark}[Choice of the function $\bar{g}$]
        \label{sec:barg}
        Now we are in a position to describe the construction of an
        appropriate extension $\bar{g}$ of $g$. We define the space
        \begin{equation}
            \cV^{g,
                \, 0} = \{v \in \cV^g \mid v = 0 \text{ on } \Gamma_A \}
        \end{equation}
        and corresponding finite element space
        \begin{equation}
            \cV^{g, \, 0}_h :=  \cV^g_h\cap \cV^{g, \, 0}
        \end{equation}
        and let $\bar{g}$ to be the solution to the following finite element
        problem: Find $\bar g\in \cV^{g, \, 0}_h$
        \begin{equation}
            (\nabla \bar{g}, \nabla v_h)
            =
            0
            \quad 
            \forall v_h \in \cV^{0, \, 0}_h
        \end{equation}
        \noindent
        $\bar{g}$ therefore has $H^1$ regularity over $\W$, satisfies the
        boundary condition on $\Gamma_D$ in the trace sense, and vanishes on
        $\Gamma_A$. We remark that this ensures also $\bar{g} \in \cK^g$. In
        the following sections as an abuse of notation, we will identify $g$
        with $\bar{g}$ to simplify the exposition.
    \end{remark}
    
    We are now ready to state the finite element approximation to this
    problem. We seek $u_h \in \cK^g_h$ such that
    \begin{equation} \label{discrete_problem}
        \left( -\mathbf{q}(u_h), \nabla (v_h + g - u_h) \right)
        \geq
        (f, v_h + g - u_h)
        \quad \forall v_h \in \cK^0_h.
    \end{equation}
    
    \section{Automated error control} \label{sec:apost}
    
    In this section we describe the derivation of an error
    indicator for the problem \eqref{darcy} - \eqref{seepage_face}. In
    doing so we make use of a \emph{dual problem} that is related to the
    linearised adjoint problem commonly used for nonlinear problems, but
    we keep only the zeroth order component of the linearisation. We then
    proceed in a similar manner to \cite{Blum2000}, where the authors
    consider a linear problem, to obtain a bound for the error in the
    quantity of interest.
    
    \subsection{Definition of Dual Problem}
    
    The definition of the dual problem is intervowen with the primal
    solution $u$ as well as the finite element approximation $u_h$. To
    begin, we define the discrete contact set as:
    \begin{equation} \label{discrete_contact_set}
        B_h 
        :=
        \{ x \in \Gamma_A \mid u_h(x) = 0 \}.
    \end{equation}
    
    \noindent
    We let
    
    \begin{equation}\label{definition_of_G}
        \cG = \{ v \in V \mid v \leq 0 \,\,\, \text{on} \,\,\, B_h  \,\,\, \text{and} 
        \,\,\, \int_{\Gamma_A}
        -\mathbf{q} (u) (v + 
        u_h) 
        \cdot 
        \mathbf{n} \diff S
        \leq 0 \},
    \end{equation}
    
    \noindent
    and suppose $J $ is a linear form whose precise structure will be
    discussed later, and let $z \in \cG$ be the solution to the following
    variational inequality:
    
    \begin{equation} \label{pseudo_linearised_dual}
        (k(u) \nabla(\varphi - z), \nabla z)
        \geq 
        J(\varphi - z)
        \quad \forall \varphi \in \cG.
    \end{equation}

    Application of duality arguments to derive error bounds in non-energy
    norms require assumptions of well-posedness on the dual problem which
    may not hold. Sharp regularity bounds on the dual problem with
    $k(u)\equiv 1$ were only recently proven in \cite{christof1754finite}
    by a non-standard choice of dual problem. Indeed, the authors prove
    bounds on the finite element error in the $L^4$ norm of optimal order,
    that is, order $h^{2 - \varepsilon}$ for any $\varepsilon \in (0, 1
    \slash 2)$ where $h$ is the mesh size. This motivates us to make the
    following assumption which we will use in the a posteriori analysis,
    the proof of which is currently the topic of ongoing research.
    
    \begin{assumption}[Convergence in $L^2$]
        \label{key_assumption}
        With $u$ solving \eqref{darcy} - \eqref{seepage_face} and $u_h$ as 
        defined in 
        \eqref{discrete_problem}, 
        there are constants $C >0$ and $s > 1$ such that 
        \begin{equation}
            \norm{u - u_h}_{L^2(\W)} 
            \leq
            C h^{s}.
        \end{equation}
    \end{assumption}

    \begin{definition}[Unrestricted solution]
        We define a function $U$ to be the solution of the elliptic problem
        analogous to problem \eqref{darcy}-\eqref{dirichlet_bc} but without
        the inequality constraint \eqref{seepage_face}. That is, $U \in
        \cV^g$ satisfies
        \begin{equation}\label{unrestricted_prob}
            (-\mathbf{q}(U) , \nabla w) 
            =
            (f, w) \quad \forall w \in \cV^0.
        \end{equation}
        The omission of a boundary term in the weak form indicates that $U$
        satisfies $\mathbf q(U)\cdot \mathbf n = 0$ on $\Gamma_A$.
    \end{definition}

    \subsection{Error Bound}
    
    Observe that by construction the function $z + u - u_h$ is a member of
    the set $\cG$. Indeed, by \eqref{seepage_face} we have $u \leq 0$ on
    $B_h$, by definition of $B_h$ and $\cG$ respectively we have $u_h=0$ and
    $z \leq 0$ on $B_h$.  We may therefore take $\varphi = z + u - u_h$ in
    \eqref{pseudo_linearised_dual} to obtain
    
    \begin{equation}\label{bound_1}
        J(u-u_h)
        \leq 
        (k(u) \nabla (u - u_h), \nabla z).
    \end{equation}
    
    \noindent
    Writing 
    
    \begin{equation}\label{nonlinear_difference}
        (k(u) \nabla(u-u_h), \nabla z)
        =
        (\mathbf{q}(u_h) - \mathbf{q}(u), \nabla z)
        - ((k(u) - k(u_h))\nabla (u_h + h_z), \nabla z),
    \end{equation}
    
    \noindent
    and expanding
    
    \begin{equation}\label{taylor}
        k(u) - k(u_h)
        =
        \int_0^1 k'(u_h + s (u - u_h))(u - u_h) \diff s,
    \end{equation}
    
    \noindent
    we note that with the a priori assumption \ref{key_assumption}, we can
    assume that the second term on the right hand side of
    \eqref{nonlinear_difference} is higher order in the error $u-u_h$ than
    the first term, and can therefore be neglected when the computation
    error becomes small. We will therefore focus on the first term in the
    following analysis.
    
    In the following lemmata, we prove bounds on differences between the
    functions $u$, $u_h$ and $U$.

    \begin{lemma}[Properties of the unrestricted solution]
        With $u$ the primal solution defined through
        (\ref{variational_inequality}), $u_h$ the finite element
        approximation to $u$ given by (\ref{discrete_problem}), and $U$
        the unrestricted solution defined in \eqref{unrestricted_prob}, we
        have, for any $v \in \cK^0$ and $v_h \in \cK_h^0$,
        
        \begin{equation}\label{alternate_weak_form}
            (\mathbf{q}(u) - \mathbf{q}(U), \nabla (v + g - u))
            \leq 0
            \quad \forall v \in \cK^0
        \end{equation}
        
        \noindent
        and 
        
        \begin{equation}\label{alternate_discrete_form}
            (\mathbf{q}(u_h) - \mathbf{q}(U), \nabla (v_h +g - u_h))
            \leq 0
            \quad \forall v_h\in \cK^0_h.
        \end{equation}
    \end{lemma}
    
    \begin{proof}
        We choose test functions $w = v+ g - u$ and 
        $w = v_h  + g - u_h$ respectively in \eqref{unrestricted_prob} where $v 
        \in \cK^0$ and 
        $v_h \in K_h^0$ are arbitrary to see that 
        
        \begin{equation} \label{unrestricted_1}
            (-\mathbf{q}(U), \nabla (v + g - u)) 
            = 
            (f, v + g - u)
            \quad \forall v \in \cK^0
        \end{equation}
        
        \noindent 
        and 
        
        \begin{equation} \label{unrestricted_2}
            (-\mathbf{q}(U), \nabla (v_h + g - u_h)) 
            = 
            (f, v_h + g - u_h)
            \quad \forall v_h\in \cK_h^0.
        \end{equation}   
        Subtracting \eqref{inequality_in_standard_form} from 
        \eqref{unrestricted_1} and 
        \eqref{discrete_problem} from \eqref{unrestricted_2}, we arrive at the 
        desired result.
        \qed
    \end{proof}
    
    \begin{definition}[Restricted solution set]
        We define the set
        \begin{equation}\label{definition_of_W}
            \cW^g_h = 
            \{ v \in \cV^g_h \mid v \leq 0 \, \text{ on } \, B_h  \}.
        \end{equation}
        Note that $\cW^g_h$ is a lightly smaller set than $\cK^g_h$, but that
        $u_h \in \cW^g_h$. This means that $u_h$ in fact satisfies
        \begin{equation}\label{alternate_discrete_form_W}
            (\mathbf{q}(u_h) - \mathbf{q}(U), \nabla (v_h + g - u_h))
            \leq 0
            \quad \forall v_h \in \cW^0_h.
        \end{equation}
    \end{definition}
    
    \begin{lemma}[Galerkin orthogonality]
        \label{galerkin}
        With $u$ the primal solution defined through
        (\ref{variational_inequality}) and $u_h$ the finite element
        approximation to $u$ given by (\ref{discrete_problem}) we have   
        \begin{equation}
            (\mathbf{q}(u_h) - \mathbf{q}(u), \nabla z_h)
            \leq 
            (\mathbf{q}(U)  - \mathbf{q}(u) , \nabla (z_h + u_h - u)) \quad \forall 
            z_h
            \in \cW^0_h,
        \end{equation}
        in analogy to the usual Galerkin orthogonality result.
    \end{lemma}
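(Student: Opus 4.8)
The plan is to reduce the asserted inequality to two one-sided estimates, each produced by a careful choice of admissible test function in the inequalities \eqref{alternate_weak_form} and \eqref{alternate_discrete_form_W} furnished by the previous lemmata. First I would introduce the unrestricted flux $\mathbf{q}(U)$ into the left-hand side by splitting
\[
(\mathbf{q}(u_h) - \mathbf{q}(u), \nabla z_h) = (\mathbf{q}(u_h) - \mathbf{q}(U), \nabla z_h) + (\mathbf{q}(U) - \mathbf{q}(u), \nabla z_h),
\]
so that the claim reduces to showing that the first bracket is non-positive and that the second may be augmented up to the full right-hand side at no cost.

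For the first bracket I would appeal to the enlarged variational inequality \eqref{alternate_discrete_form_W}, selecting the test function $v_h = z_h + u_h - g \in \cW^0_h$. With this choice $v_h + g - u_h = z_h$, so \eqref{alternate_discrete_form_W} collapses to $(\mathbf{q}(u_h) - \mathbf{q}(U), \nabla z_h) \leq 0$. The substantive point, and the place I expect to spend the most care, is verifying that $v_h$ is genuinely admissible: on $\Gamma_D$ one has $z_h = 0$ and $u_h = g$, so $v_h$ vanishes there, while on $B_h$ the construction of $\bar g$ in Remark \ref{sec:barg} gives $g \equiv 0$ and the definition \eqref{discrete_contact_set} gives $u_h \equiv 0$, whence $v_h = z_h \leq 0$. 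It is precisely here that the larger set $\cW^0_h$ is indispensable: since $z_h$ is sign-constrained only on $B_h$ and not on all of $\Gamma_A$, the candidate $v_h$ need not lie in $\cK^0_h$, which is exactly why \eqref{alternate_discrete_form_W} rather than \eqref{alternate_discrete_form} must be invoked.

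For the second bracket I would compare it with the target right-hand side $(\mathbf{q}(U) - \mathbf{q}(u), \nabla(z_h + u_h - u))$; the two differ by the term $(\mathbf{q}(U) - \mathbf{q}(u), \nabla(u_h - u))$, so it suffices to show this last quantity is non-negative. Testing \eqref{alternate_weak_form} with $v = u_h - g$, which lies in $\cK^0$ because $u_h = g$ on $\Gamma_D$ and $u_h \leq 0 = g$ on $\Gamma_A$, produces $v + g - u = u_h - u$ and hence $(\mathbf{q}(u) - \mathbf{q}(U), \nabla(u_h - u)) \leq 0$, which is the required sign. Chaining the two estimates, namely discarding the non-positive first bracket and adding back the non-negative correction, yields exactly the stated inequality. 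Beyond the admissibility bookkeeping no analytic difficulty arises, since every step is an algebraic rearrangement followed by a sign check; the argument is a direct nonlinear analogue of the classical Galerkin orthogonality computation.
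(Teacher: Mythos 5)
Your proof is correct and follows essentially the same route as the paper: both use the test function $v_h = z_h + u_h - g$ in the discrete inequality over $\cW^0_h$ and $v = u_h - g$ in \eqref{alternate_weak_form}, your two-step split being an algebraic rearrangement of the paper's three-term identity \eqref{useful_identity}. If anything you are slightly more careful than the paper, which nominally cites \eqref{alternate_discrete_form} where, as you observe, the enlarged version \eqref{alternate_discrete_form_W} is what admissibility of $v_h$ actually requires.
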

    
    \begin{proof}
        We can write
        \begin{equation} \label{useful_identity}
            \begin{split}
                (\mathbf{q}(u_h) - \mathbf{q}(u), \nabla z_h)
                & =
                (\mathbf{q}(U)  - \mathbf{q}(u), \nabla (z_h + u_h - u)) \\
                & +
                (\mathbf{q}(u_h) - \mathbf{q}(U), \nabla z_h) \\
                & + 
                (\mathbf{q}(U) - \mathbf{q}(u), \nabla (u - u_h)).
            \end{split}
        \end{equation}
        Now suppose $z_h \in \cW^0_h$. By setting $v_h = u_h + z_h - g$ in 
        \eqref{alternate_discrete_form}, the 
        second 
        term 
        on the right hand side of \eqref{useful_identity} is negative. Similarly, 
        choosing $v = u_h -g$ in 
        \eqref{alternate_weak_form}, the final term is also negative, and the 
        result follows. 
        \qed
    \end{proof}
    
    \begin{lemma}[Property of the dual solution]
        \label{ineq_0}
        Let $u$ be the primal solution defined through
        (\ref{variational_inequality}), $z$ be the dual solution from
        (\ref{pseudo_linearised_dual}) and $u_h$ the finite element
        approximation to $u$ given by (\ref{discrete_problem}). Then, we
        have
        \begin{equation}
            (\mathbf{q}(U) - \mathbf{q}(u), \nabla(z + u_h - u)) \leq 0.
        \end{equation}
    \end{lemma}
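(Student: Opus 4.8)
The plan is to collapse the left-hand side to a single boundary integral over $\Gamma_A$ and then invoke the two defining properties of $\cG$ in turn. The key structural fact is that $u$ and $U$ solve the \emph{same} interior equation $\nabla\cdot\mathbf q=f$ in $\W$, so their difference carries no volume contribution. Concretely, I would test the unrestricted weak form \eqref{unrestricted_prob} against $w:=z+u_h-u$ and pair this with the integration-by-parts identity behind \eqref{weak_form}, namely $(\mathbf q(u),\mathbf n\,w)_{\Gamma_A}-(\mathbf q(u),\nabla w)=(f,w)$. Subtracting the two relations, the source terms $(f,w)$ cancel, as does the Neumann contribution, since $\mathbf q(u)\cdot\mathbf n=0$ on $\Gamma_N$ by \eqref{flux_bc} and $\mathbf q(U)\cdot\mathbf n=0$ on $\Gamma_N\cup\Gamma_A$ by the defining property of the unrestricted solution. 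The $\Gamma_D$ boundary term vanishes because $u,u_h\in\cV^g$ coincide with $g$ there while $z$ vanishes on $\Gamma_D$. What survives is exactly
\[
(\mathbf q(U)-\mathbf q(u),\nabla(z+u_h-u)) = \int_{\Gamma_A} -\mathbf q(u)\cdot\mathbf n\,(z+u_h-u)\diff S .
\]

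It then remains to show the right-hand side is nonpositive, and here I would split the integrand as $(z+u_h)-u$. The $u$-part contributes $\int_{\Gamma_A}(\mathbf q(u)\cdot\mathbf n)\,u\diff S$, which vanishes identically by the complementarity relation $u(\mathbf q(u)\cdot\mathbf n)=0$ in \eqref{seepage_face}. For the remaining part I would use that $z\in\cG$: the second inequality in the definition \eqref{definition_of_G}, read off for the element $z$ itself, states precisely that $\int_{\Gamma_A}-\mathbf q(u)(z+u_h)\cdot\mathbf n\diff S\leq0$. Adding the two pieces yields $(\mathbf q(U)-\mathbf q(u),\nabla(z+u_h-u))\leq0$, which is the claim. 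It is worth noting that this lemma is precisely what the otherwise opaque integral constraint built into $\cG$ is designed to deliver.

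The main obstacle is analytic rather than algebraic. Reducing the bilinear form to a boundary integral presupposes that the normal-flux traces $\mathbf q(u)\cdot\mathbf n$ and $\mathbf q(U)\cdot\mathbf n$ are meaningful on $\partial\W$ and that the interior equation holds in a strong enough sense to justify the pairing; this is delicate because the van Genuchten conductivity $k$ degenerates as the soil dries and the solution loses regularity near the boundary $\partial B$ of the contact set. I would avoid fresh integration by parts by working directly with the already-integrated identity underlying \eqref{weak_form}, so that the trace $(\mathbf q(u),\mathbf n\,\cdot)_{\Gamma_A}$ enters only in the form the paper has already given meaning to; since the derivation of that identity---multiply \eqref{darcy} by $w$ and integrate by parts using \eqref{flux_bc}---never uses the sign constraint, it holds for all $w\in\cV^0$ and in particular for $w=z+u_h-u$. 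The one tacit ingredient I would make explicit is the convention that members of the dual space vanish on $\Gamma_D$, which is what kills the $\Gamma_D$ term; with that in hand the remainder is simply a matter of matching the boundary data \eqref{flux_bc}, \eqref{seepage_face} and the property $\mathbf q(U)\cdot\mathbf n=0$ on $\Gamma_A$ against the two constraints encoded in $\cG$.
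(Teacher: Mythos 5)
Your proof is correct and takes essentially the same route as the paper's own: you subtract the unrestricted weak form \eqref{unrestricted_prob} from the identity behind \eqref{weak_form}, both tested with $z+u_h-u$, reduce the bilinear form to the boundary integral $\int_{\Gamma_A}-\mathbf{q}(u)\cdot\mathbf{n}\,(z+u_h-u)\diff S$, eliminate the $u$-part via the complementarity condition in \eqref{seepage_face}, and conclude from the integral constraint in the definition \eqref{definition_of_G} of $\cG$ applied to $z$ itself. Your additional remarks---that the derivation of \eqref{weak_form} never uses the sign constraint and so extends to all of $\cV^0$, and that $z$ must vanish on $\Gamma_D$ for the test function to be admissible---merely make explicit what the paper leaves tacit.
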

    
    \begin{proof}
        By the definition of $U$ we have 
        \begin{equation}
            (-\mathbf{q}(U), \nabla(z + u_h - u))
            = 
            (f, z + u_h - u)
        \end{equation}
        
        \noindent
        and by \eqref{weak_form},
        
        \begin{equation}
            (-\mathbf{q}(u), \nabla(z + u_h - u))
            = 
            (f, z + u_h - u)
            -  (\mathbf{q}(u), \mathbf{n} (z + u_h - u))_{\Gamma_A},
        \end{equation}
        
        \noindent
        and therefore, noting that $u (k(u) \nabla u) = 0$ on $\Gamma_A$,
        
        \begin{equation}
            \begin{split}
                (\mathbf{q}(U) - \mathbf{q}(u), \nabla(z + u_h - u))
                &=
                \int_{\Gamma_A} -\mathbf{q}(u) \cdot \mathbf{n} (z + u_h - u) \diff 
                S \\
                &=
                \int_{\Gamma_A} -\mathbf{q}(u) \cdot \mathbf{n} (z + u_h) \diff S 
                \leq 0,
            \end{split}
        \end{equation}    
        \noindent
        by the definition of the space $\cG$.
        \qed
    \end{proof}
    
    We now state the main result of this section.
    
    \begin{Theorem}[Error bound]
        \label{main_result}
        Let $u$ be the solution of \eqref{inequality_in_standard_form} and $u_h$ 
        the finite element 
        approximation to $u$. Let $U$ be the solution of the unrestricted 
        problem \eqref{unrestricted_prob}, $z$ the dual solution of 
        \eqref{pseudo_linearised_dual} and $z_h 
        \in W_h$ an arbitrary function. Then to leading order, we have
        
        \begin{equation} \label{main_result_eq}
            \begin{split}
                J(u - u_h)
                &\lesssim
                (\mathbf{q}(u_h) - \mathbf{q}(U), \nabla (z - z_h)) .
            \end{split}
        \end{equation}
    \end{Theorem}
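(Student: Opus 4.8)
The plan is to begin from the bound \eqref{bound_1}, namely $J(u-u_h) \leq (k(u)\nabla(u-u_h), \nabla z)$, and to use the splitting \eqref{nonlinear_difference} together with the Taylor expansion \eqref{taylor}. Invoking Assumption \ref{key_assumption}, the contribution $((k(u) - k(u_h))\nabla(u_h + h_z), \nabla z)$ is quadratic in the error $u-u_h$ and is therefore discarded to leading order, which is exactly where the symbol $\lesssim$ in the statement comes from. This reduces the claim to establishing the purely variational inequality
\begin{equation*}
(\mathbf{q}(u_h) - \mathbf{q}(u), \nabla z) \leq (\mathbf{q}(u_h) - \mathbf{q}(U), \nabla(z - z_h)),
\end{equation*}
which no longer references the nonlinearity of $k$ directly and can be attacked with the three preceding lemmata.

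To prove this reduced inequality I would introduce the two auxiliary objects $z_h$ and $U$ by adding and subtracting. First, split $\nabla z = \nabla(z - z_h) + \nabla z_h$ and apply Lemma \ref{galerkin} to the discrete part, bounding $(\mathbf{q}(u_h) - \mathbf{q}(u), \nabla z_h)$ above by $(\mathbf{q}(U) - \mathbf{q}(u), \nabla(z_h + u_h - u))$. Next, in the remaining term $(\mathbf{q}(u_h) - \mathbf{q}(u), \nabla(z - z_h))$, insert $\mathbf{q}(U)$ by writing $\mathbf{q}(u_h) - \mathbf{q}(u) = (\mathbf{q}(u_h) - \mathbf{q}(U)) + (\mathbf{q}(U) - \mathbf{q}(u))$. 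This produces the desired term $(\mathbf{q}(u_h) - \mathbf{q}(U), \nabla(z - z_h))$ together with two leftover contributions, both carrying the common factor $\mathbf{q}(U) - \mathbf{q}(u)$.

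The crux of the argument is that these two leftover terms recombine exactly. Using linearity of the inner product in its second argument,
\begin{equation*}
(\mathbf{q}(U) - \mathbf{q}(u), \nabla(z - z_h)) + (\mathbf{q}(U) - \mathbf{q}(u), \nabla(z_h + u_h - u)) = (\mathbf{q}(U) - \mathbf{q}(u), \nabla(z + u_h - u)),
\end{equation*}
and the right-hand side is non-positive by Lemma \ref{ineq_0}. Discarding it yields precisely the reduced inequality, and hence the theorem. I expect the only genuinely delicate point to be the bookkeeping: one must employ the \emph{same} arbitrary $z_h \in \cW^0_h$ throughout, so that the $z_h$ contributions cancel and the telescoped second argument is exactly $z + u_h - u$, the function appearing in Lemma \ref{ineq_0}. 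The nonlinearity of the operator never re-enters at this stage, since it has been isolated in the single higher-order term dropped at the outset; what remains is the careful matching of the inserted and subtracted quantities.
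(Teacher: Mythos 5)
Your proposal is correct and follows essentially the same route as the paper's proof: neglect the higher-order nonlinear term via Assumption \ref{key_assumption}, split off $z_h$, apply Lemma \ref{galerkin} to the discrete part, insert $\mathbf{q}(U)$, and recombine the two $\mathbf{q}(U)-\mathbf{q}(u)$ terms into $(\mathbf{q}(U)-\mathbf{q}(u),\nabla(z+u_h-u))$, which is discarded by Lemma \ref{ineq_0}. The only difference is cosmetic bookkeeping (you split $\mathbf{q}(u_h)-\mathbf{q}(u)$ in the $z-z_h$ term, whereas the paper adds and subtracts within the Galerkin term), and your observation that a single fixed $z_h\in\cW^0_h$ must be used throughout is exactly right.
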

    
    \begin{proof}
        Starting from \eqref{bound_1} and neglecting the higher order term, 
        justified by Assumption 
        \ref{key_assumption},
        
        \begin{equation} \label{ineq_1}
            \begin{split}
                J(u - u_h)
                & 
                \leq
                (\mathbf{q}(u_h) - \mathbf{q}(u), \nabla z)\\
                &
                = 
                (\mathbf{q}(u_h) - \mathbf{q}(u), \nabla (z-z_h))
                +
                (\mathbf{q}(u_h) - \mathbf{q}(u), \nabla z_h).
            \end{split}
        \end{equation}
        
        \noindent
        Combining with Lemma (\ref{galerkin}) gives
        
        \begin{equation} \label{more_steps}
            \begin{split}
                (\mathbf{q}(u_h) - & \mathbf{q}(u), \nabla (z-z_h))
                +
                (\mathbf{q}(u_h) - \mathbf{q}(u), \nabla z_h) \\
                \leq 
                & \,  (\mathbf{q}(u_h) - \mathbf{q}(u), \nabla (z-z_h))
                +
                (\mathbf{q}(U) - \mathbf{q}(u), \nabla (z_h + u_h - u)) \\
                =
                & \, (\mathbf{q}(u_h) - \mathbf{q}(u), \nabla (z-z_h))
                +
                (\mathbf{q}(U) - \mathbf{q}(u), \nabla (z + u_h - u))\\
                & +
                \, (\mathbf{q}(U) - \mathbf{q}(u), \nabla (z_h - z))
                \\
                =&
                (\mathbf{q}(u_h)- \mathbf{q}(U), \nabla (z - z_h))
                +
                (\mathbf{q}(U) - \mathbf{q}(u), \nabla(z + u_h - u)),
            \end{split}
        \end{equation}
        upon rearranging. The second term is negative by Lemma
        \ref{ineq_0}, completing the proof.  \qed
    \end{proof}
    
    To illustrate the usefulness of this result, we state the following corollary to 
    theorem 
    \ref{main_result}. 
    
    \begin{corollary}[A posteriori error indicator]    
        With the notation of theorem \ref{main_result}, we have the local
        error estimate
        \begin{equation} \label{element_contribution}
            J(u-u_h)
            \leq
            \sum_{K \in \mathcal{T}}
            (f - \nabla \cdot \mathbf{q}(u_h), z - z_h)_K
            + \frac{1}{2} (\jump{ \mathbf{q}(u_h) }, z-z_h)_{\partial K}.
        \end{equation}
    \end{corollary}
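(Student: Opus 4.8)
The plan is to obtain \eqref{element_contribution} purely by applying element-wise integration by parts to the right-hand side of Theorem \ref{main_result}; the estimate is then an identity (to leading order) rather than a fresh inequality, so the ordering is inherited directly from \eqref{main_result_eq} and all that remains is careful bookkeeping of interior and boundary contributions.

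First I would eliminate the unrestricted solution $U$ from the expression $(\mathbf{q}(u_h) - \mathbf{q}(U), \nabla(z - z_h))$. Since both the dual solution $z$ and the arbitrary $z_h$ vanish on $\Gamma_D$ in the trace sense, their difference $z - z_h$ lies in $\cV^0$ and is therefore admissible as the test function $w$ in \eqref{unrestricted_prob}. This choice gives $(\mathbf{q}(U), \nabla(z - z_h)) = -(f, z - z_h)$, so that
\[
(\mathbf{q}(u_h) - \mathbf{q}(U), \nabla(z - z_h)) = (\mathbf{q}(u_h), \nabla(z - z_h)) + (f, z - z_h).
\]
Next I would split the first term over the mesh as $\sum_{K \in \mathcal{T}} (\mathbf{q}(u_h), \nabla(z-z_h))_K$ and integrate by parts on each $K$, using $(\mathbf{q}(u_h), \nabla(z-z_h))_K = -(\nabla\cdot\mathbf{q}(u_h), z - z_h)_K + (\mathbf{q}(u_h)\cdot\mathbf{n}, z - z_h)_{\partial K}$ with $\mathbf{n}$ the outward unit normal to $K$. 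Reassembling the volume contributions with the source term produces exactly the interior residual $\sum_K (f - \nabla\cdot\mathbf{q}(u_h), z - z_h)_K$.

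It then remains to reorganise the edge contributions $\sum_K (\mathbf{q}(u_h)\cdot\mathbf{n}, z - z_h)_{\partial K}$. On any interior facet shared by two elements the traces of $z - z_h$ agree, because $z - z_h \in H^1(\W)$, while the outward normals are opposite, so the two adjacent element contributions combine into $(\jump{\mathbf{q}(u_h)}, z - z_h)$ on that facet, the jump being that of the normal component of the flux. Writing this facet-based sum back as a sum over elements, each interior facet is met by exactly two elements, which is precisely what the factor $\tfrac{1}{2}$ in \eqref{element_contribution} compensates. For the boundary facets, those contained in $\Gamma_D$ contribute nothing since $z - z_h$ vanishes there, while the residual flux on facets in $\Gamma_N$ and $\Gamma_A$ is absorbed into the jump term under the usual convention that the jump across a boundary facet records the discrepancy between the computed flux $\mathbf{q}(u_h)\cdot\mathbf{n}$ and the prescribed datum, here zero after using the natural condition $\mathbf{q}(U)\cdot\mathbf{n} = 0$ satisfied by the unrestricted solution on $\Gamma_A$.

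The step I expect to be most delicate is the treatment of the facets on $\Gamma_A$: unlike a clean Neumann portion, the discrete solution need not satisfy $\mathbf{q}(u_h)\cdot\mathbf{n} = 0$ there, and one must track how the seepage conditions \eqref{seepage_face}, the property $z \le 0$ on $B_h$ built into $\cG$, and the vanishing boundary term already exploited in Lemma \ref{ineq_0} interact, so that the $\Gamma_A$ contribution is consistently represented by the boundary jump rather than surviving as a separate term. Everything else is the routine integration-by-parts manipulation standard in residual-based a posteriori estimation.
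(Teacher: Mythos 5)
Your proposal is correct and follows essentially the same route as the paper's own (very terse) proof: test \eqref{unrestricted_prob} with $z - z_h \in \cV^0$ to replace $(\mathbf{q}(U), \nabla(z - z_h))$ by $-(f, z - z_h)$, then integrate by parts element by element, the factor $\tfrac{1}{2}$ compensating the double counting of interior facets. The delicacy you flag on $\Gamma_A$ in fact requires no further input at this stage: all the inequality content (the seepage conditions, the sign of $z$ on $B_h$) was already spent in Theorem \ref{main_result} and Lemma \ref{ineq_0}, so the facet terms on $\Gamma_A$ and $\Gamma_N$ are plain flux residuals against the zero-Neumann condition satisfied by $U$, absorbed into the boundary-jump convention exactly as you describe.
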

    
    \begin{proof}
        Since $U$ solves \eqref{unrestricted_prob}, we can replace it in the
        right hand side of \eqref{main_result_eq} and introduce the problem
        data:
        \begin{equation}
            (\mathbf{q}(u_h) - \mathbf{q}(U), \nabla (z - z_h))
            =
            (f , z - z_h) + (\mathbf{q}(u_h), \nabla (z - z_h)).
        \end{equation}
        After integrating by parts over each element we obtain the stated result.
        \qed
    \end{proof}

    Equation \eqref{element_contribution} gives a local quantity that we can 
    approximately evaluate to 
    give an estimate of the local numerical error. 
    Given a suitable approximation of the dual error $z - z_h$, this quantity can 
    be computed and used to 
    inform adaptive mesh refinement. The approximate computation of the 
    error estimate will be addressed 
    in section \ref{sec:algorithm}.
    
    \begin{remark}
        The analysis above allows the choice of $J$ to be made by the user 
        depending on the problem at 
        hand. The resulting estimate used in an adaptive algorithm will prioritise 
        the accurate computation of 
        $J$. For example,
        \begin{enumerate}
            \item  \ 
            Fix $x_0 \in \W$ and set $J_1(\varphi) = \varphi(x_0)$. An
            adaptive routine based upon the resulting estimate would
            prioritise accurate computation of the point value of the
            solution at $x_0$.
            \item  \
            Setting $J_2(\varphi) = (u - u_h, \varphi)$ would give an
            estimate of the error in the global error in $L^2$. Using
            suitable approximations, such an approach can be used in
            practice, see section 4 of \cite{becker2001optimal}.
            \item  \
            In seepage problems, a common quantity of interest is the
            volumetric flow rate of water through the seepage
            face. Since by definition the soil is saturated along the
            seepage face, the hydraulic conductivity takes the constant
            value $K_s$ (see section \ref{sec:implementation}). The
            fluid velocity is given by \eqref{velocity} and therefore
            the volumetric flow rate is given by
            \begin{equation} \label{functional}
                J(u) 
                :=
                -\int_{\Gamma_A} \frac{K_s}{\phi} \nabla(u + h_z) \cdot 
                \mathbf{n} \diff S
                = \int_{\Gamma_A} \frac{\mathbf{q}(u)}{\phi}\cdot \mathbf{n} 
                \diff S
            \end{equation}
        \end{enumerate}
    \end{remark}

    \section{Implementation Details} \label{sec:implementation}
    
    In this section we discuss various aspects of the practical solution of problem 
    \eqref{darcy} - 
    \eqref{seepage_face}. We first discuss the choice of parametrisation of $k$ 
    in \eqref{darcy}, then 
    present the iterative numerical algorithm used to solve the nonlinear 
    problem. Finally, we discuss 
    aspects of the adaptive routine and the tools required to approximately 
    evaluate the error estimate.
    
    \subsection{Hydrogeological Properties of the Medium}
    
    We make use of the popular model of \cite{mualem1976new} and
    \cite{VanGenuch80} to parametrise the unsaturated hydraulic properties
    of the soil. Consider a volume $V$ of a porous medium of total volume
    $V_{total}$. $V$ is made up of the solid matrix and air- or
    fluid-filled pores. If $V_{water}$ is the total volume of water
    contained in $V$, the volumetric water content $\theta$ is $V_{water}
    \slash V_{total}$, and therefore takes values between $0$ and the
    porosity of the soil. Point values of water content can be defined in
    the usual way of taking the water content over a representative
    elementary volume around the point (we refer to section 1.3 of
    \cite{Beardynamics} for details).  Water content is related to the
    pressure head in the soil, and can be modelled as a nonlinear function
    $\theta(u)$.  The dimensionless water content $\Theta$ was defined by
    van Genuchten as
    
    \begin{equation}
        \Theta(u) = \frac{\theta(u) - \theta_R}{\theta_S - \theta_R},
    \end{equation}
    
    \noindent
    where $\theta_R$ and $\theta_S$ are respectively the minimum and 
    maximum volumetric water 
    contents supported by a soil. Then the normalised water content $\Theta$ 
    takes values between 0 and 
    $1$ with $1$ corresponding to saturation. 
    Hydraulic conductivity, that is the nonlinear coefficient $k$ in 
    \eqref{darcy} is modelled similarly, and takes 
    strictly positive values reaching its maximum value at saturation. The 
    shapes of the functions $k$ and $\Theta$ 
    are dictated by 
    choice of dimensional parameters $K_S$ and $\alpha$, and 
    non-dimensional parameter $n$. The units 
    are $[K_S] = ms^{-1}$ and $[\alpha]=m^{-1}$. Soil Parameters are often 
    fitted 
    following laboratory experiments for a given soil. The saturated hydraulic 
    conductivity $K_S$ is the 
    maximum value that $k$ can take.  Finally, $\alpha$ and $n$ are shape 
    parameters whose physical 
    meaning is less clear. The 
    parameter $m$, introduced for ease of presentation, is defined by $m = (n-1) 
    \slash n$.
    This model has been shown to give good predictions in most soils near 
    saturation by 
    \cite{VanGenuch85describing}. 
    
    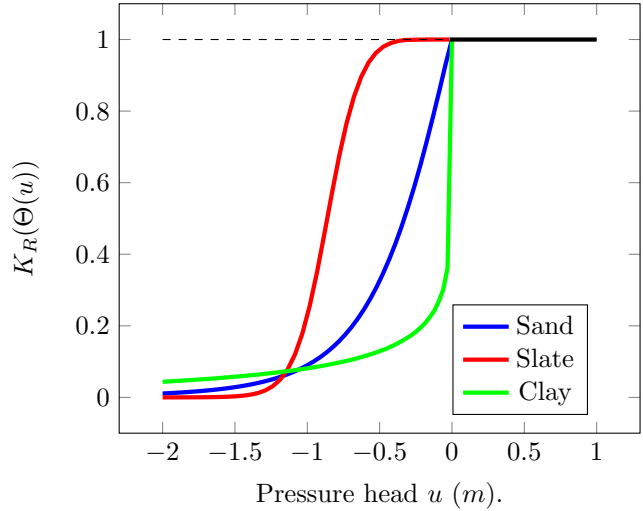
\begin{figure}[h!]
        \begin{center}
            \begin{tikzpicture}[
                declare function={
                    T(\u,\alpha,\n) =
                    (1/( (1+ (-\alpha*\u)^\n)^((\n-1)/\n)) )^0.5(
                    1 -  (    1 - (1/( (1+ (-\alpha*\u)^\n)^((\n-1)/\n)) )^(\n/(\n - 1))      
                    )^((\n-1)/\n)  )^2;
                },
                ]
                \begin{axis}[
                    xlabel={Pressure head $u$ ($m$).},
                    ylabel={$K_R(\Theta(u))$},
                    legend style={at={(0.9,0.3)}},
                    domain=-2:1,
                    ]
                    \addplot[blue, ultra thick,samples=70] {T(x,1,2.06)};
                    \addplot[red, ultra thick,samples=60] {T(x,0.982,6.97)};
                    \addplot[green, ultra thick,samples=100]{T(x,0.152,1.17)};
                    \addplot[dashed,black,samples=2] {1};
                    \addplot[black, ultra thick, domain=-0.015:1] {1};
                    \legend{
                        Sand,
                        Slate,
                        Clay
                    }
                \end{axis}
            \end{tikzpicture}
        \end{center}
        \caption{The permeability coefficient as a function of pressure head
            $u$ for different soil types. Note that $k(u) \to 0$ as $u \to
            -\infty$ but $K_R > 0$ for all $u$. Further, observe the
            smoothness of $K_R$ is quite different at $u=0$ for different soil
            types. This lack of regularity makes the numerical simulation of,
            say clay, particularly challenging. We also note that these
            functions are scaled by the saturated hydraulic conductivity,
            $K_S$, which varies enormously between different soils. The mean
            value for different soil types is $5 \times 10^{-6} ms^{-1}$
            (sand), $5 \times 10^{-9}ms^{-1}$ (slate) and $1 \times
            10^{-8}ms^{-1}$ (clay).}
        \label{fig:soil_parameters}
    \end{figure}
    
    \begin{equation}\label{VG_water_content}
        \Theta(u) =  \begin{cases}
            \frac{1}{\left[ 1 + (-\alpha u)^{n} \right] ^m} &\text{$u < 0$}\\
            1 &\text{$u \geq 0$}
        \end{cases}
    \end{equation}
    
    \begin{equation}\label{VG_relative_K}
        K_R(\Theta(u)) = \begin{cases}
            \Theta(u) ^{\frac{1}{2}} \left[ 1 - \left(1 - \Theta(u)^{\frac{1}{m}} 
            \right)^{m} \right] ^2 
            &\text{$u < 0$}\\
            1 &\text{$u \geq 0$}
        \end{cases}
    \end{equation}
    
    \noindent
    from which $k$ is then obtained by scaling by the saturated hydraulic 
    conductivity:
    
    \begin{equation}\label{k}
        k(u) = K_S \, K_R(\Theta(u)).
    \end{equation}
    
    Examples of hydraulic behaviour of different soils are shown in figure
    \ref{fig:soil_parameters}. The smoothness of the function $K_R$ as it
    approaches saturation is largely determined by the parameter $n$, with
    larger $n$ resulting in a smoother transition from unsaturated to
    saturated soil.
    
    \subsection{Solution Methods}
    
    To solve the nonlinear problem, we use a Picard iterative technique,
    common in the literature for computations in variably saturated flow
    \cite{paniconi1994newton_or_picard}. As described in
    \cite{seepage_scudeler2017examination}, we choose to implement the
    seepage face boundary condition using a type of active set strategy in
    a way that allows it to be updated within the Picard iteration during
    the solution process of the PDE. This has clear benefits for the
    accurate resolution of the seepage face, and it is especially
    important in the adaptive framework that the exit point be allowed to
    move to take advantage of increasing resolution during the adaptive
    process. A practical way of achieving this within the nonlinear
    iteration was first presented in \cite{neuman1973saturated}, but its
    focus on representing a single seepage face in an a priori assumed
    part of the boundary limits the range of applicability. The procedure
    was generalised in \cite{cooley1983seepage_face_algorithm} to allow
    any number of seepage faces by checking for unphysical behaviour at
    boundary nodes. This is essentially the method used here, but
    assignment is element-wise. Pressure and flux is checked along
    boundary faces which are then assigned as being on the seepage face or
    not, determining the boundary condition to be enforced at the next
    iteration. It was observed that this approach resulted in less
    oscillation of the exit point through the iterative process. This
    process can be thought of as a physically motivated version of a
    projection method for solving variational inequalities, as described
    in section 2 of \cite{oden1980theory}. The algorithm is illustrated
    below (see Algorithm \ref{nested_sol}).
    
    \begin{algorithm}
        \caption{An Iterative Scheme for the Seepage 
        Problem}\label{nested_sol}
        \begin{algorithmic}[1]
            \Require $u^0$, $\text{TOL}$, $N$
            \Ensure $u_h$, the approximation to the solution of the variational 
            inequality
            \State Set $i=1$;
            \While {$i < N$}
            \State Set $B_h := \{ x \in \Gamma_A \mid u_h^{i-1}(x) = 0 \}$;
            \For {degrees of freedom, $x_q$, over $\Gamma_A$}
            \If {$u_h^{i-1}(x_q) > 0$ and $x_q\notin  B_h$}
            \State Constrain $u_h^i(x_q) = 0$;
            \ElsIf {$(\mathbf{q}(u_h^i) \cdot \mathbf n)(x_q)  < 0$ and $x_q\in 
            B_h$}
            \State Constrain $(\mathbf{q}(u_h^i) \cdot \mathbf n)(x_q) = 0$;
            \Else 
            \State Leave boundary conditions unchanged;
            \EndIf
            \EndFor
            \State Find $u_h^i$ such that: $\int_\W k(u_h^{i-1}) \nabla(u_h^i + 
            h_z) \cdot \nabla v_h = \int_\W 
            f v_h$ for all $v_h$ over a space with boundary conditions as above;
            \If{$e:=\norm{u_h^i - u_h^{i-1}}_{L^2(\W)} < \text{TOL}$}
            \State Set $u_h:= u_h^i$;
            \State Break;
            \EndIf
            \State $i\texttt{++}$;
            \EndWhile
        \end{algorithmic}
    \end{algorithm}
    
    The nonlinear iteration is controlled by monitoring the difference in
    $L^2$-norm between successive iterates normalised by the norm of the
    newest iterate. Since we are concerned with the error in the finite
    element approximation, a very small iteration tolerance is set to
    ensure that the nonlinear error is small compared to discretisation
    error. The iteration registers a failure if this tolerance is not met
    within a specified number of steps, but in practice this did not
    occur.
    
    \subsection{Adaptive Algorithm} \label{sec:algorithm}

    In this section we describe the structure of the algorithm used to
    optimise the mesh,
    SOLVE$\rightarrow$ESTIMATE$\rightarrow$MARK$\rightarrow$REFINE.
    
    \begin{enumerate}
        \item 
        SOLVE the discretisation on the current mesh;
        \item 
        Calculate the local error ESTIMATE $\eta_k$;
        \item 
        Use $\eta_k$ to MARK a subset of cells that we wish to refine or
        coarsen based on the size of the local indicator;
        \item 
        REFINE the mesh.
    \end{enumerate}
    
    \subsubsection{Marking}
    
    Cells are marked for refinement using D\"{o}rfler marking, which was used 
    in \cite{Dorf1996} to 
    guarantee error reduction in adaptive approximation of the solution to the 
    Poisson problem. Choose 
    $\theta \in (0, 1)$. The estimate for the error 
    is given by $\eta = \sum_{K \in \mathcal{T}} \eta_K$. We 
    mark for refinement all elements $K \in \mathcal{M}$, where 
    $\mathcal{M}$ is a minimal collection 
    of 
    elements such that 
    
    \begin{equation}
        \sum_{K \in \mathcal{M}} \eta_K 
        \geq 
        \theta \eta.
    \end{equation}
    
    \subsubsection{Refining and Coarsening}
    
    An initial mesh $\mathcal{T}^0$ is generated over the computational 
    domain. In what follows, we use a 
    quadrilateral mesh since it allows for efficient refinement as detailed below. 
    During the solution 
    process, $\mathcal{T}^{l+1}$ is 
    obtained from $\mathcal{T}^l$ by adapting the mesh so that the local mesh 
    size is smaller around 
    cells marked for refinement and larger around cells marked for coarsening.  
    If an element is marked for 
    refinement it is 
    quadrisected. Thus, existing degrees of freedom do not need to be moved 
    meaning that the 
    change of mesh is rather efficient. Moreover we have a guarantee that the 
    shape of the elements 
    will not degenerate as the mesh is refined. Hanging nodes are permitted, but 
    constrained so that 
    the resulting discrete solution remains 
    continuous. It is therefore advantageous to allow a small amount of mesh 
    smoothing such as 
    setting a 
    maximum difference of grid levels between adjacent cells. In the 
    implementation of this algorithm, 
    the actual refinement and coarsening algorithm enforces 
    additional constraints to preserve the regularity of the mesh. For example, 
    the difference in 
    refinement level across a cell boundary is allowed to be at most one. In 
    practice, this is achieved by 
    refining some extra cells that were not marked to \lq smooth' the mesh. The 
    motivation behind this 
    is that many results on the approximation properties of finite element 
    methods require a degree of 
    mesh regularity. For a more detailed explanation of the implementation of 
    mesh refinement, we 
    refer the reader to the extensive \texttt{deal.ii} documentation available 
    online \cite{dealII91}. With 
    regards to 
    coarsening, due to the hierarchical structure of the meshes that result from 
    this process, cells that 
    have been refined \lq parent', that is, a quadrilateral in $\mathcal{T}^i$ for 
    some 
    $i \leq l$ in which it is fully contained. If all four \lq children' elements are 
    marked for coarsening, 
    the vertex at the middle of the four elements is removed and the parent cell 
    is restored resulting in 
    a locally coarser mesh.

    \subsubsection{Evaluating the Estimate}
    
    Recall the error estimate of proposition \ref{main_result}:
    
    \begin{equation}
        \eta 
        =
        \sum_{K \in \mathcal{T}} \eta_K,
    \end{equation}
    where
    \begin{equation}
        \eta_K
        =
        (f - \nabla \cdot \mathbf{q}(u_h), z - z_h)_K 
        +
        \frac{1}{2} (\jump{\mathbf{q}(u_h) }, z-z_h)_{\partial K}.
    \end{equation}
    
    \noindent
    Note that $\eta_K$ can only be approximately calculated since the
    exact dual solution $z$ is not available. There are several strategies
    for doing this which produce similar results
    \cite{bangerth2013adaptive}.  For computational efficiency, we choose
    a cheap averaging interpolation to obtain a higher order approximation
    of the dual solution as follows.
    
    The dual problem is solved on the same finite element space as the
    primal problem to obtain an approximation $z_h$. A function
    $\bar{z}_h$ is then constructed from $z_h$ in the following
    manner. Consider the mesh $\bar{\mathcal{T}^l}$ such that refining
    every element of $\bar{\mathcal{T}^l}$ produces $\mathcal{T}^l$. The
    nodal values of $z_h$ are used to produce a piecewise quadratic
    function on $\bar{\mathcal{T}^l}$. This technique is sometimes used as
    a post-processor to improve the quality of finite element
    approximation itself \cite{dedner2019residual}.  We make the
    approximation
    \begin{equation} \label{eta_k}
        \eta_K
        \approx
        (f - \nabla \cdot \mathbf{q}(u_h), \bar{z}_h- z_h)_K
        +
        \frac{1}{2} (\jump{\mathbf{q}(u_h)}, \bar{z}_h - z_h)_{\partial K}.
    \end{equation}
    
    \begin{remark}[Approximation of the space $\cG$]
        We finally remark that in the practical implementation, we must
        solve the dual problem in the set $\cW_h^g$ which may or may not be
        a subset of $\cG$. This is due to the fact that the exact contact
        set is not available, and so we do not have access to $\cG$. In
        fact, the authors of \cite{Blum2000} further suggest approximating
        $\cG$ by $\cG^0 : = \{ v \in \cV^0 \mid v = 0 \,\,\, \text{on}
        \,\,\, B_h \}$, and we also take this approach. This reduces the dual
        problem to a linear elliptic PDE, thereby simplifying the adaptive
        process.
    \end{remark}
    
    \section{Numerical Benchmarking} \label{sec:numerics}
    
    In this section, we present numerical results to demonstrate the
    effectiveness of the error estimate and adaptive routine in a range of
    realistic situations of interest in the analysis of subsurface
    flow. In this sense, we aim to benchmark our work to justify its use
    in the next section where we tackle specific case studies.
    
    All simulations presented here are conducted using \texttt{deal.II},
    an open source C\texttt{++} software library providing tools for
    adaptive finite element computations \cite{dealII91}. A fifth order
    quadrature formula is used in the assembly of the finite element
    system for each linear solve to attempt to capture some of the
    variation in the coefficients. To avoid any possible issues with
    convergence of linear algebra routines, an exact solver, provided by
    UMFPACK, is used to invert the system matrix.
    
    In all simulations we take as our quantity of interest the volumetric
    flow rate of water through the seepage face given in equation
    \eqref{functional}.
    
    \subsection{Example 1: Aquifer Feeding a Well}
    
    As a first two-dimensional example, let $\W = [0,1]^2$ represent a
    vertical section of a subsurface region. Spatial dimensions are given in 
    metres. 
    We refer to Figure
    \ref{schematic} for a visual representation of this problem, and give
    the specifics here.  The upper surface $\{(x,z) \mid z = 1\}$
    represents the land surface while $\{(x,z) \mid z = 0\}$ is
    impermeable bedrock. In both cases no-flux boundary conditions are
    enforced. We remark that in certain cases the land surface could
    exhibit seepage faces, as we will see in Example 2, but we assume that
    this will not be the case here. On $\{(x,z) \mid x = 1\}$, a
    hydrostatic Dirichlet condition is enforced for the pressure with the
    water table height set at 0.8$m$, that is, we set $u = 0.8 - z$ along
    this portion of the boundary. This corresponds to setting the
    groundwater table far from the well. Finally, $\{(x,z) \mid x =0\}$ is
    the inner wall of the well. The well is filled with water up to a
    fixed level $H_w$, and a hydrostatic Dirichlet condition for the
    pressure is applied along the portion of the boundary that is in
    contact with the body of water.  Above $H_w$, the seepage face
    boundary conditions apply. For the simulations presented here, we
    choose $H_w = 0.25m$. We remark that this simple setup and variations
    of it are common benchmarks for works on seepage problems
    \cite{oden1980theory,cooley1983seepage_face_algorithm,unconfinedseepagekazemzadeh2012,zheng2005new}.
    
    For the soil parametrisation, we make the choices $n = 2.06$, $\alpha
    = 1m^{-1}$, $K_S = 1{ms^{-1}}$.  This results in a soil that has the 
    characteristics
    of silt whose hydraulic conductivity has been scaled to have magnitude
    1. We note that in the stationary case when the right hand vanishes, this 
    has no effect on the pressure head.
    
    Figure \ref{well_1} shows an approximation to the the solution of the
    problem in this case, with the associated adjoint solution in Figure
    \ref{well_dual_1}. Notice the adjoint solution takes its largest
    values along the seepage face along which the quantity of interest is
    evaluated, with values increasing along streamlines that terminate
    there. This is to be expected as it demonstrates that the flow
    upstream of the seepage face has the greatest influence upon the
    quantity of interest.

    The simulation is initialised on a coarse mesh of 256 elements and
    uses the goal-based estimate as refinement criterion. A selection of
    meshes generated by the adaptive algorithm is given in figure
    \ref{ex_1_mesh1}--\ref{ex_1_mesh4}.  
    
    \begin{figure}[ht!]
        \centering 
        \begin{subfigure}{0.45\textwidth}
            \includegraphics[width=\linewidth]{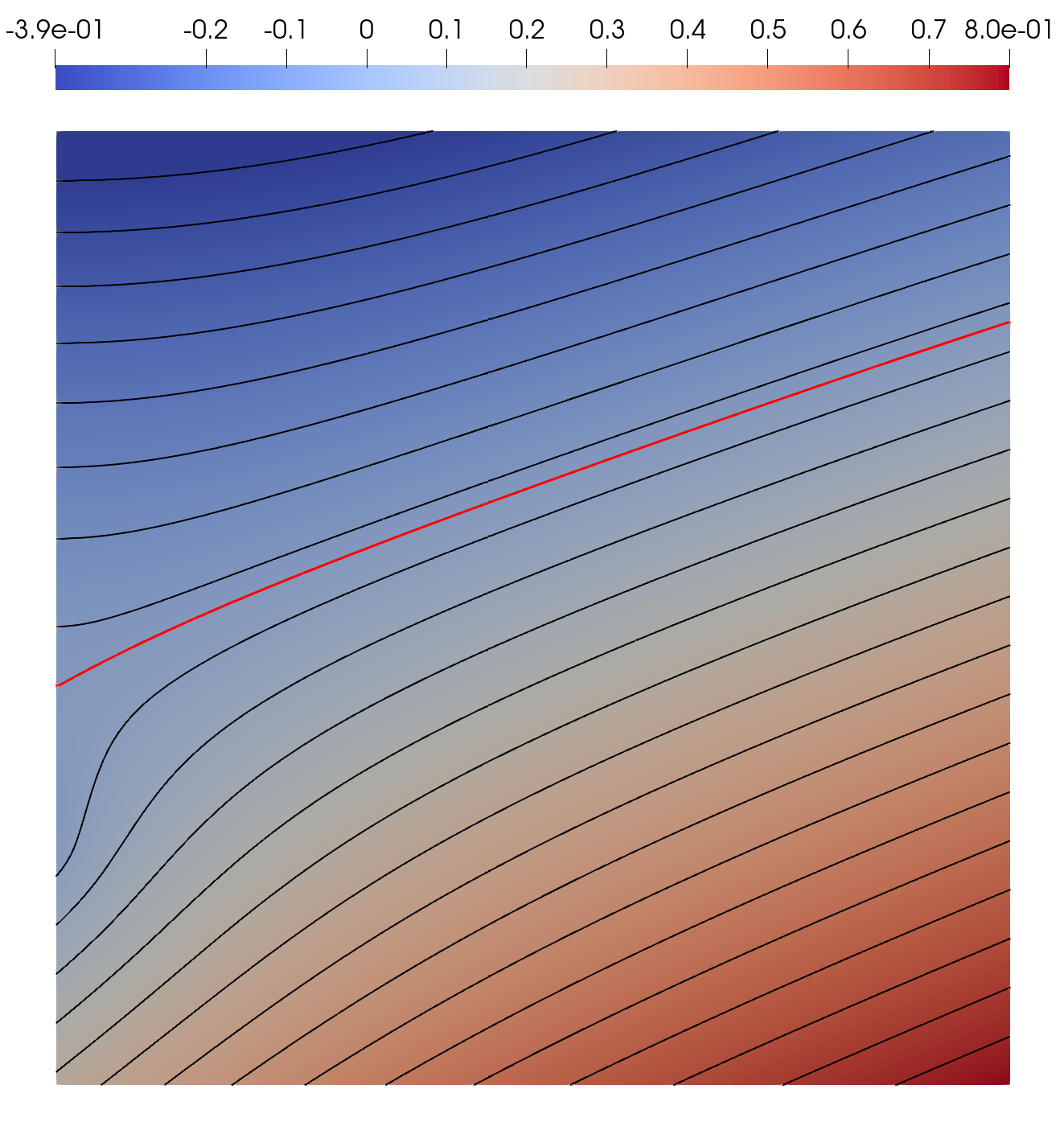}
            \caption{Contours of pressure. Level set $u_h = 0$ marked with red line.
                \label{well_1}
            }        
        \end{subfigure}\hfil 
        \begin{subfigure}{0.45\textwidth}
            \includegraphics[width=\linewidth]{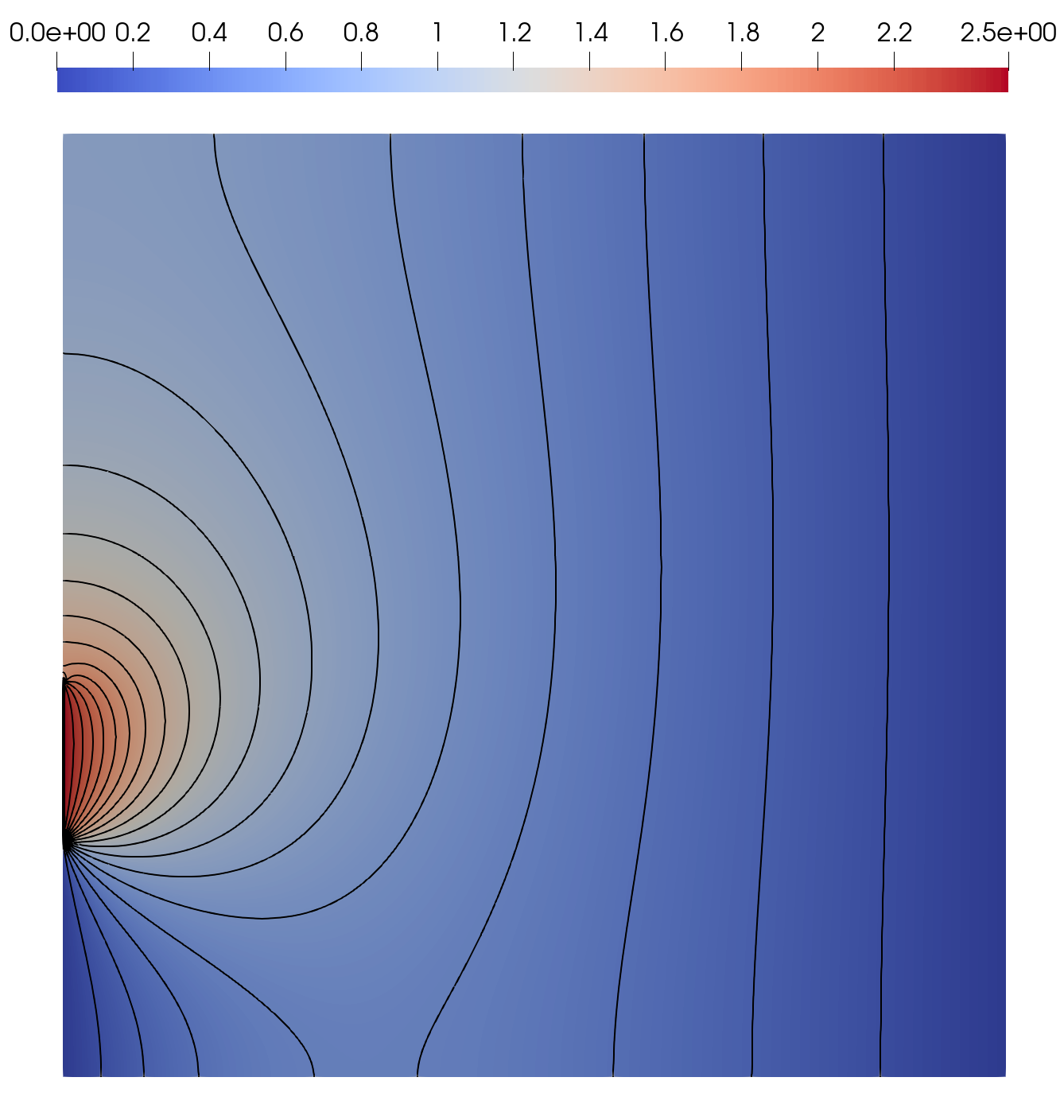}
            \caption{Contours of adjoint variable, $z_h$. Note that by definition 
            $z_h\geq 0$.
                \label{well_dual_1}
            }
        \end{subfigure}\hfil 
        \\
        \begin{subfigure}{0.24\textwidth}
            \includegraphics[width=\linewidth]{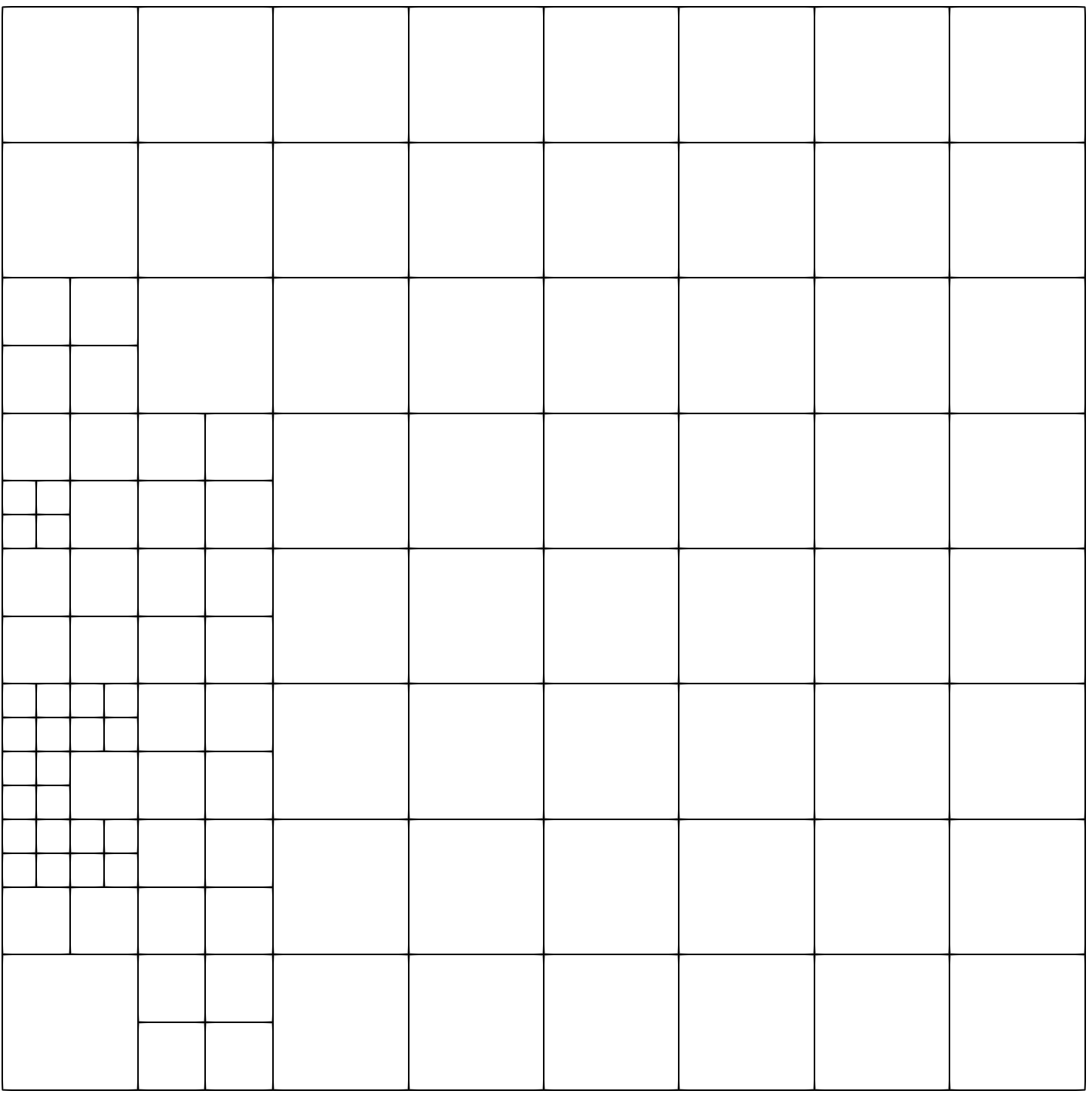}
            \caption{$\mathcal{T}^1$
                \label{ex_1_mesh1}}
        \end{subfigure}\hfil 
        \begin{subfigure}{0.24\textwidth}
            \includegraphics[width=\linewidth]{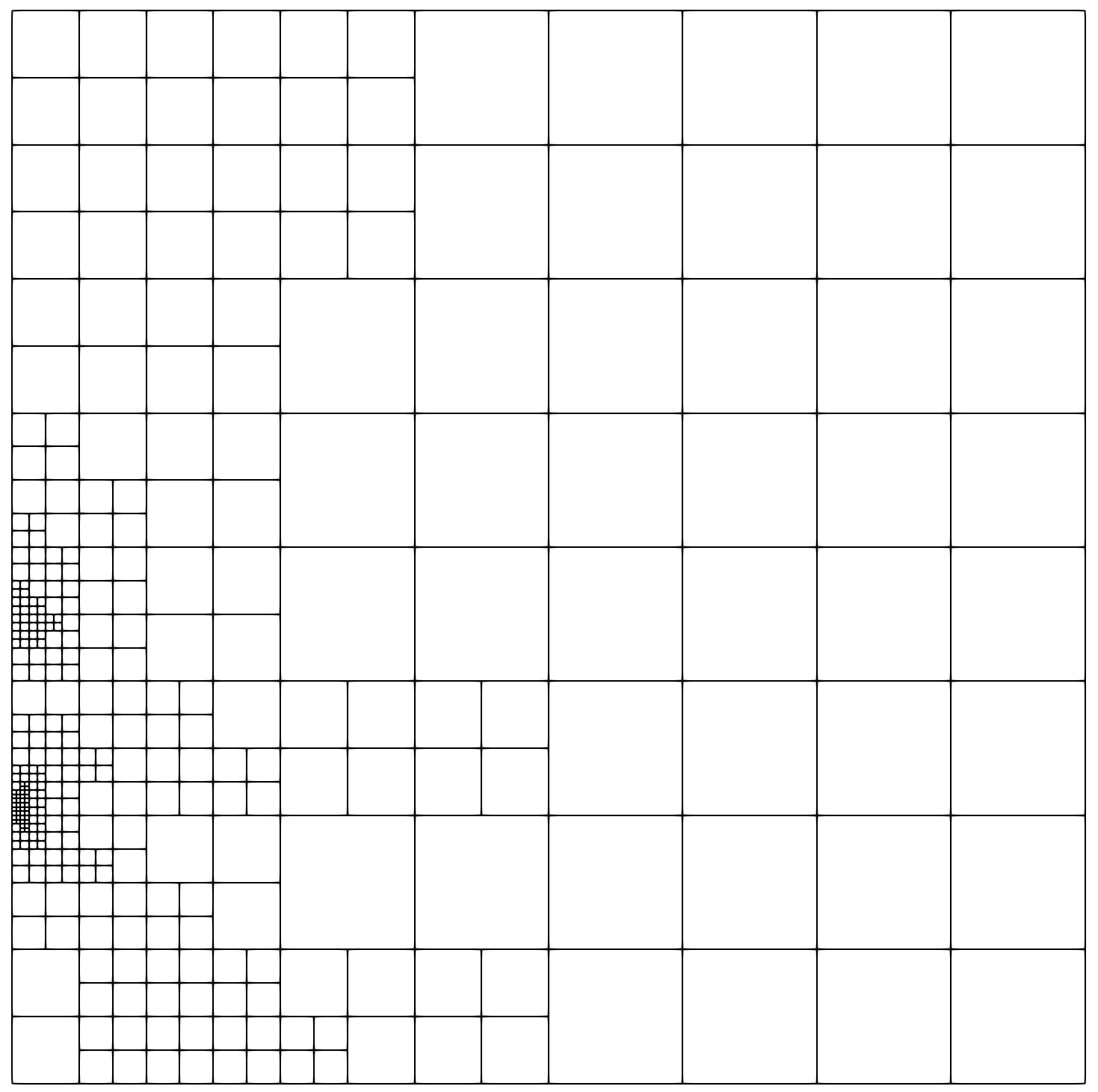}
            \caption{$\mathcal{T}^4$}
        \end{subfigure}\hfil 
        \begin{subfigure}{0.24\textwidth}
            \includegraphics[width=\linewidth]{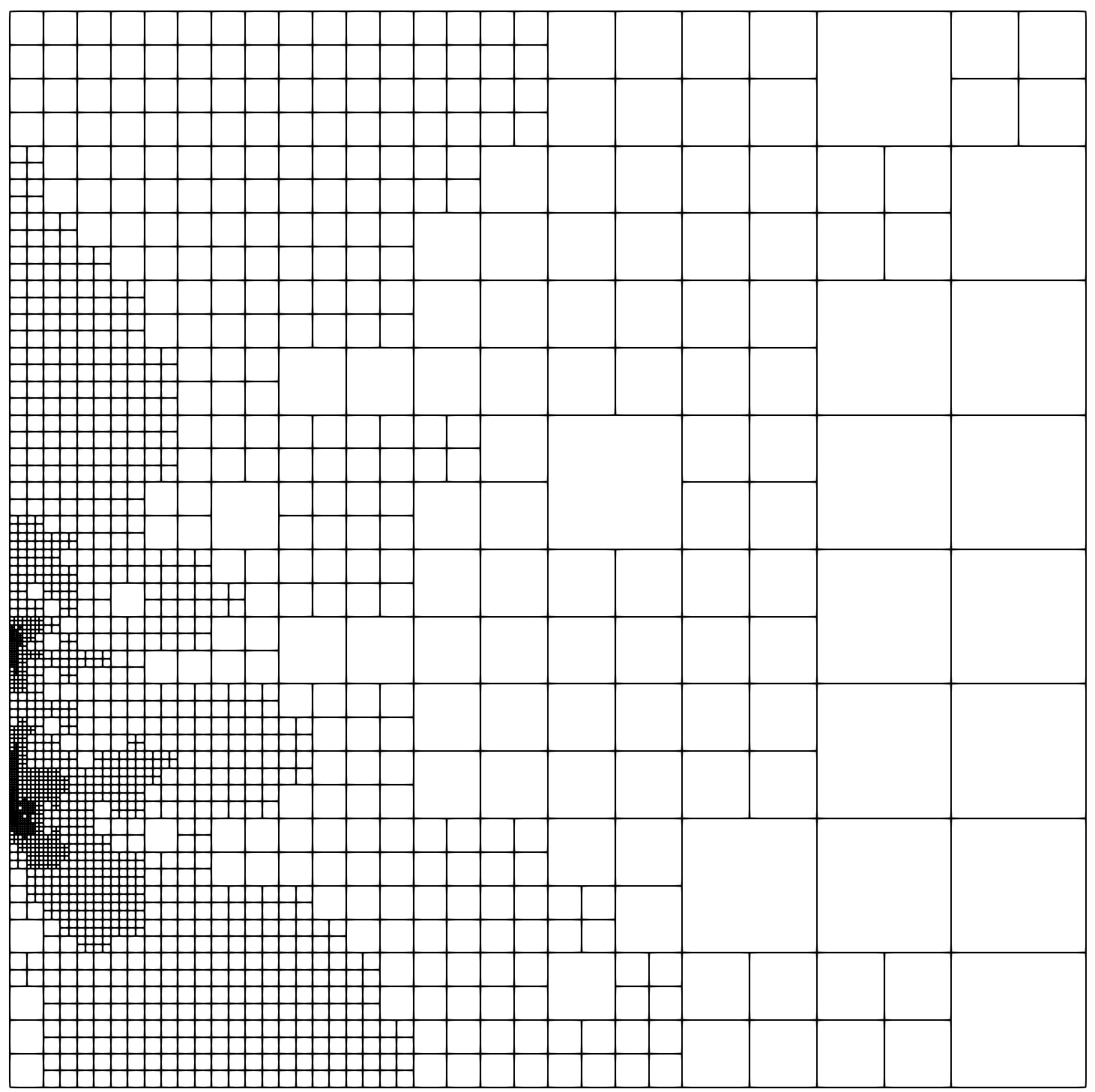}
            \caption{$\mathcal{T}^7$}
        \end{subfigure}\hfil 
        \begin{subfigure}{0.24\textwidth}
            \includegraphics[width=\linewidth]{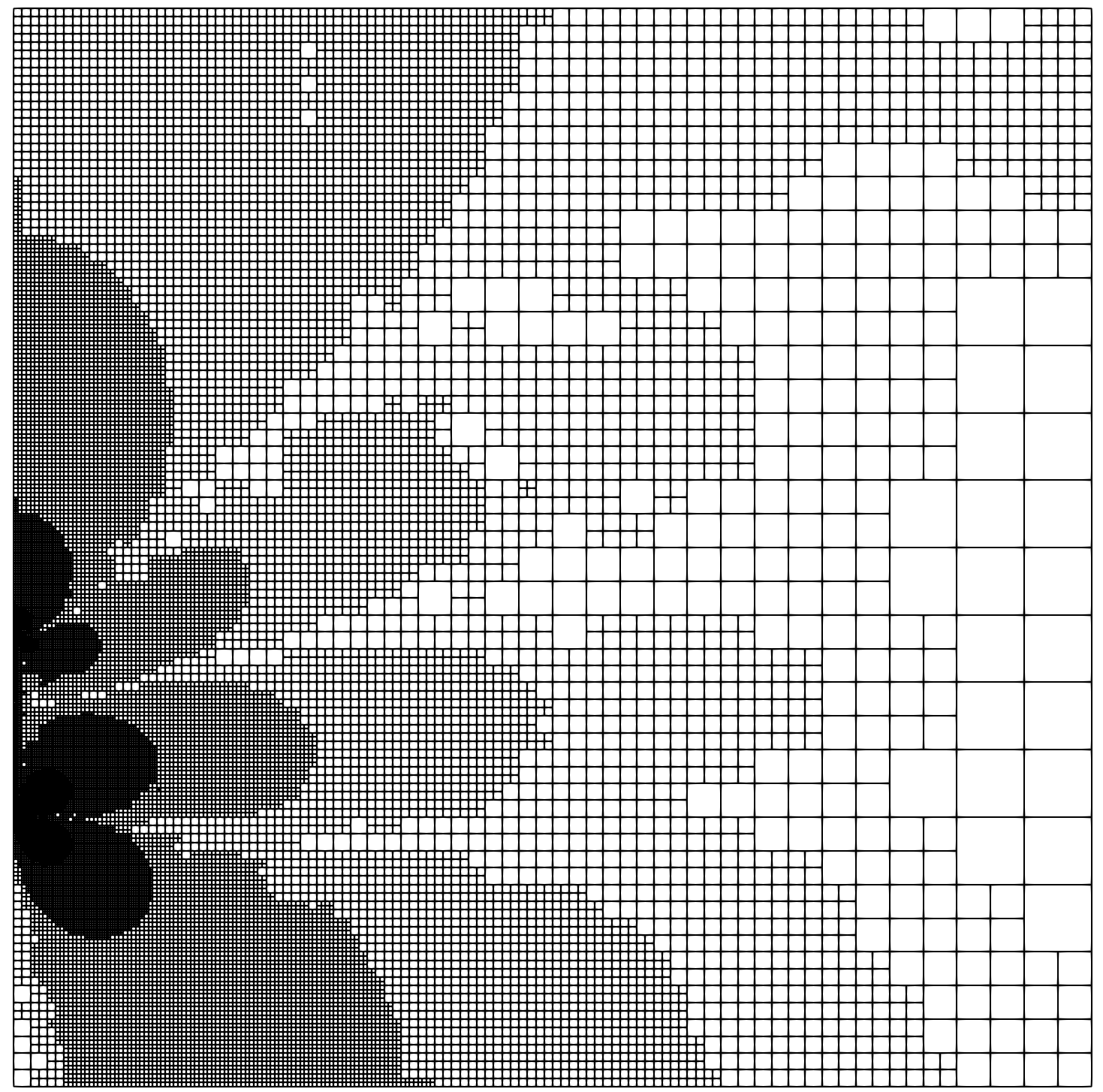}
            \caption{$\mathcal{T}^{11}$\label{ex_1_mesh4}}
        \end{subfigure}\hfil 
        \caption{Example 1, flow through a single layered, silty soil. We
            show the pressure, adjoint solution and a sample of adaptively
            generated meshes showing refinement upstream of the seepage
            face. The primal variable $u_h$ and the adjoint variable $z_h$
            are both represented on $\mathcal{T}^{11}$ which has
            approximately 66000 degrees of freedom.  }
    \end{figure}

    \subsection{Example 2: Sloping Unconfined Aquifer with Impeding 
    Layer}
    
    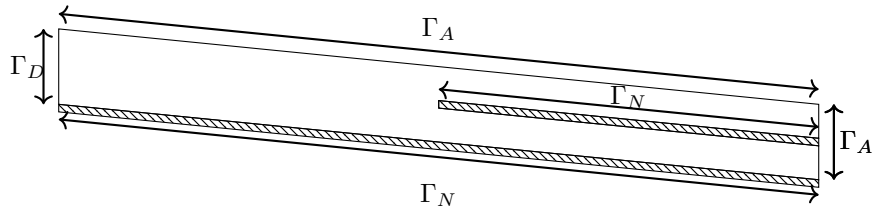
\begin{figure}[h!]
        \begin{center}
            \begin{tikzpicture}[scale=1.] 
                \draw (0,1) -- (0,2) -- (10,1) -- (10.,0.55) -- (5,1.05) -- (5,0.95) -- 
                (10,0.45) -- (10,0) -- cycle; 
                \draw [pattern=north west lines, pattern color=black] (0,1) -- (10, 0) 
                -- (10, -0.1) -- (0,0.9) -- cycle;
                \draw [pattern=north west lines, pattern color=black] (10.,0.55) -- 
                (5,1.05) -- (5,0.95) -- (10,0.45) -- cycle; 
                \draw[thick,<->] (0.,.8) -- (10.,-0.2);
                \node at (5, -.2) {$\Gamma_N$};
                \draw[thick,<->] (-0.2,1) -- (-0.2,2);
                \node at (-0.4,1.5) {$\Gamma_D$};         
                \draw[thick,<->] (0,2.2) -- (10,1.2);
                \node at (5,2.) {$\Gamma_A$};
                \draw[thick,<->] (5,1.2) -- (10,0.7);
                \node at (7.5,1.1) {$\Gamma_N$};
                \draw[thick,<->] (10.2,0) -- (10.2,1);
                \node at (10.5,.5) {$\Gamma_A$};
                \draw[thick,<->] (10.2,0) -- (10.2,1);
                \node at (10.5,.5) {$\Gamma_A$};
            \end{tikzpicture}
        \end{center}
        \caption{ The domain models a slope lying on a layer of bedrock
            with a downstream external boundary. The domain is a
            parallelogram with corners $(0,1)$, $(0,2)$, $(10, 1)$ and
            $(10,0)$ where all dimensions are in metres. The lower extent of
            the domain represents an impermeable boundary, as does a layer
            of rock parallel to the land surface towards the right hand side
            of the domain. This layer is 0.1m thick with corners $(5,0.95)$,
            $(5,1.05)$, $(10, 0.45)$ and $(10,0.55)$. The water table is
            fixed with a Dirichlet boundary condition on the left hand
            boundary of the domain.  }
        \label{fig:ex2}
    \end{figure}
    
    The second test case is taken from
    \cite{seepage_scudeler2017examination}. Its relevance was shown in
    \cite{rulon1985development} where the location of impeding layers was
    shown to have large effects on the saturation conditions of the
    soil. The domain setup is illustrated in Figure \ref{fig:ex2}. This
    configuration leads to water flowing down the slope due to gravity,
    and allows multiple seepage faces to form. We introduce a forcing
    term, representing an underground spring, above the layer to force
    extra seepage faces. It is defined by:
    \begin{equation}
        \label{forcing_term}
        f(x) = \begin{cases}
            10
            &\text{ if dist($x, (9,1.15)) < 0.2$}\\
            0 &\text{ otherwise.}
        \end{cases}
    \end{equation}
    
    We make the same choice of soil parameters as in example 1, that is $n = 
    2.06$, $\alpha
    = 1m^{-1}$, $K_S = 1{ms^{-1}}$. 
    
    The results of this are given in Figure \ref{ex_2_meshes}. As can be
    seen in Figure \ref{slope_2}, three disjoint seepage faces arise from
    this simulation, two on the right hand face, one above and one below
    the impermeable barrier, and another at the land surface. It should be
    noted that the seepage face at the land surface would generate surface
    run-off. This process is not taken into account by the model we use.
    
    The simulation is initialised on a coarse mesh of 4036 elements.
    An adaptive simulation using the dual-weighted estimate produced the
    meshes in figure \ref{ex_2_meshes}. The algorithm refines heavily
    around the source and all seepage faces, as well as resolving the
    corners around the impeding layer.
    
    \begin{figure}[h!]
        \begin{center}
            \begin{subfigure}{0.9\textwidth}
                \includegraphics[width=\linewidth]{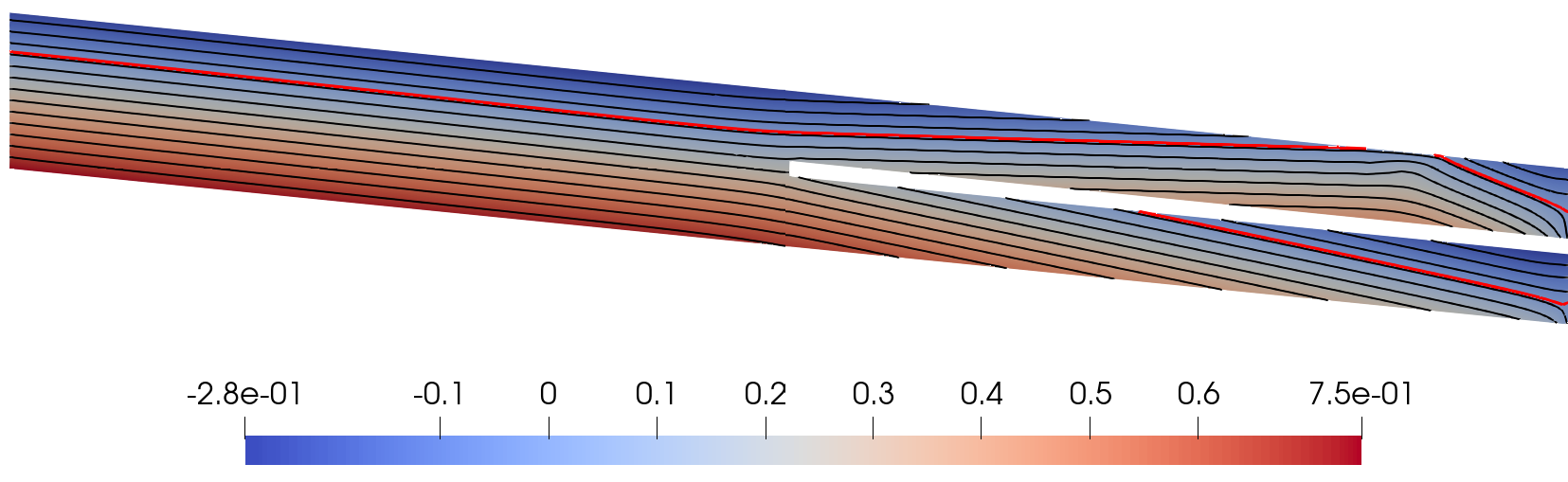}
                \caption{Simulation of hillside with water leak. Level set $u_h = 0$ 
                marked with red line.
                    \label{slope_2}
                }
            \end{subfigure}
            \begin{subfigure}{0.9\textwidth}
                \includegraphics[width=\linewidth]{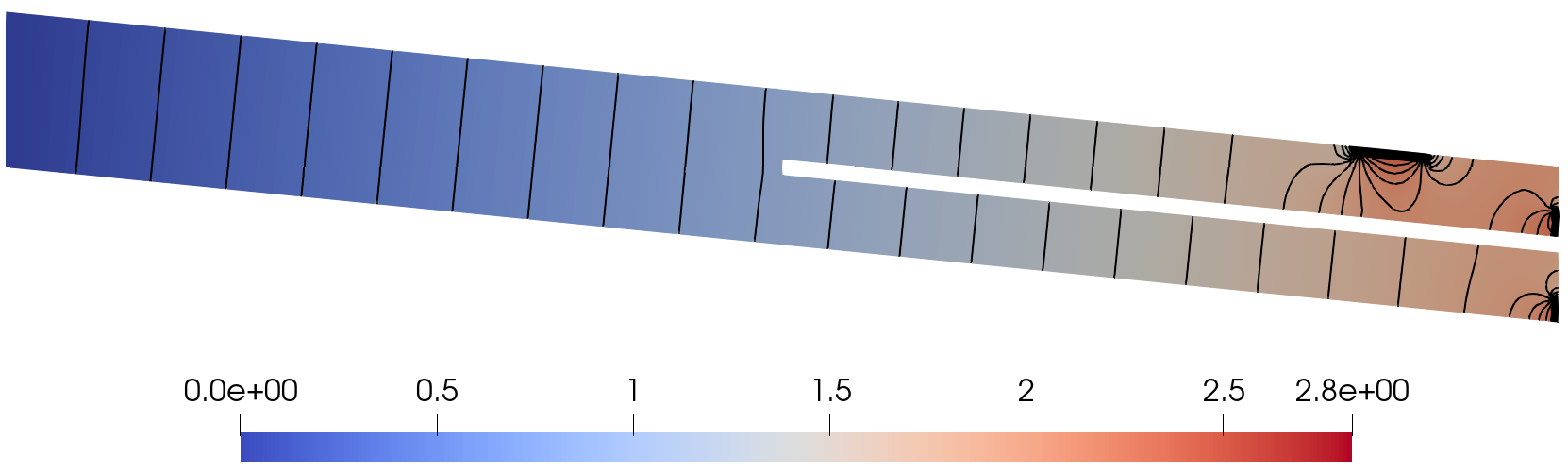}
                \caption{Contours of adjoint variable, $z_h$. Note the extreme 
                clustering of contours around the 
                    three seepage faces as well as high density around the source.
                    \label{slope_dual}
                }
            \end{subfigure}
            \\    \begin{subfigure}{0.45\textwidth}
                \includegraphics[width=\linewidth]{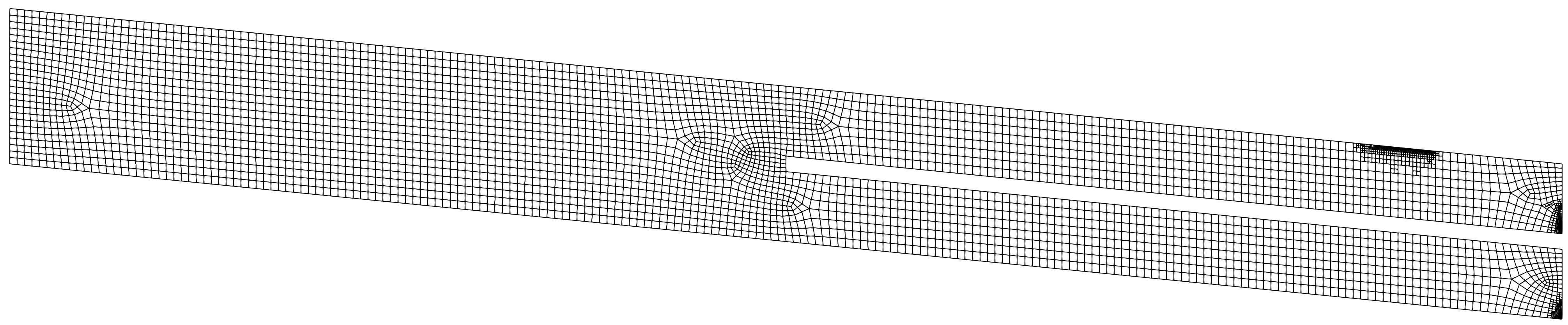}
                \caption{$\mathcal{T}^7$}
            \end{subfigure}\hfil 
            \begin{subfigure}{0.45\textwidth}
                \includegraphics[width=\linewidth]{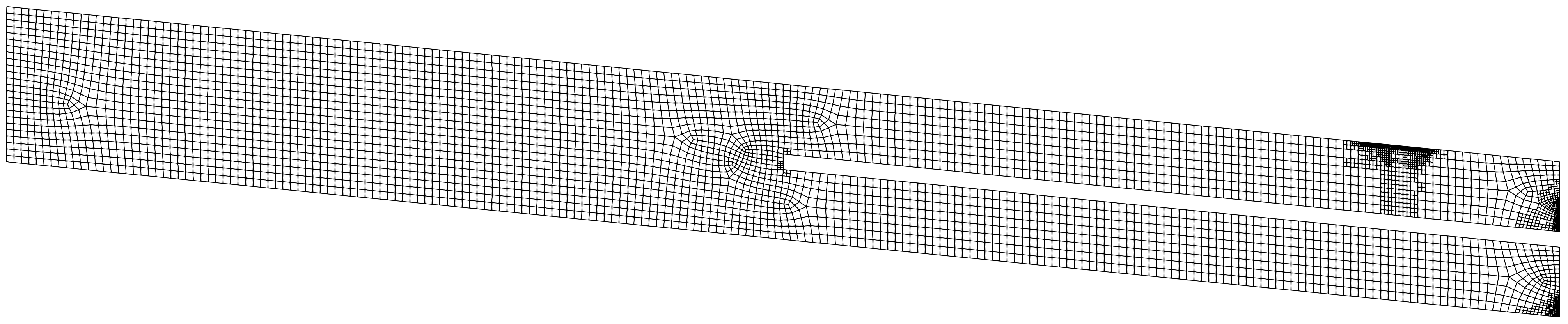}
                \caption{$\mathcal{T}^{10}$}
            \end{subfigure}\hfil 
            \begin{subfigure}{0.45\textwidth}
                \includegraphics[width=\linewidth]{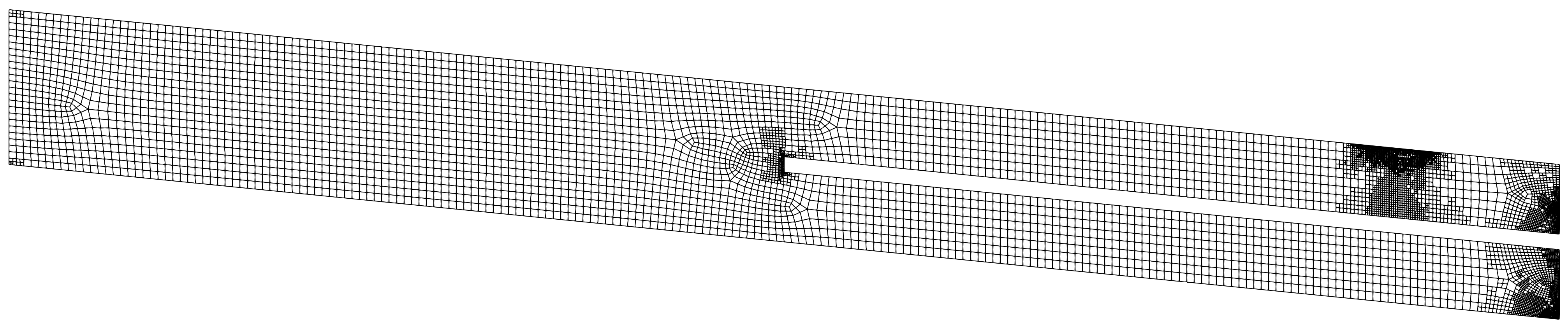}
                \caption{$\mathcal{T}^{14}$}
            \end{subfigure}\hfil 
            \begin{subfigure}{0.45\textwidth}
                \includegraphics[width=\linewidth]{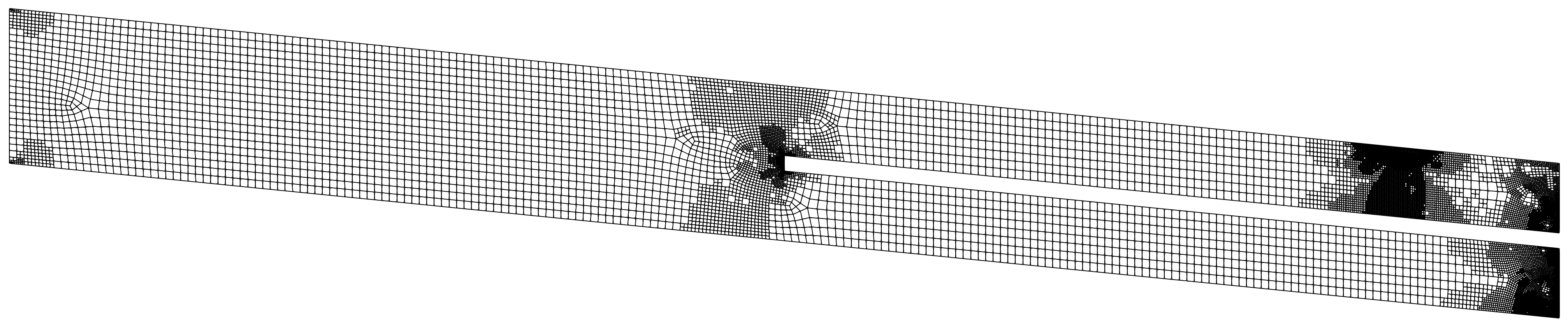}
                \caption{$\mathcal{T}^{17}$}
            \end{subfigure}\hfil 
            \caption{Example 2, flow through a sloped aquifer with impeding
                layer. We show the pressure, adjoint solution and a sample of
                adaptively refined meshes that capture multiple seepage faces as
                well as potential singularities in the pressure at the corner in
                the domain. The primal and adjoint variable are both represented
                on $\mathcal{T}^{17}$ which has approximately $7 \times 10^5$
                degrees of freedom.}
            \label{ex_2_meshes}
        \end{center}
    \end{figure}
    
    \subsection{Estimator Effectivity Summary}
    
    In Examples 1 and 2 above we compute a reference value for $J(u)$
    obtained from a simulation on a very fine grid. This was taken as the
    \lq true' value to perform analysis of the behaviour of the
    estimate. In Figures \ref{fig:sharpness_1}--\ref{fig:sharpness_2}, we
    see that as the simulation progresses the effectivity of the estimate,
    defined as the ratio of the error to the estimate, becomes very close
    to 1.
    
    \begin{figure}[!h]
        \centering 
        \begin{subfigure}{0.45\textwidth}
            \includegraphics[width=\linewidth]{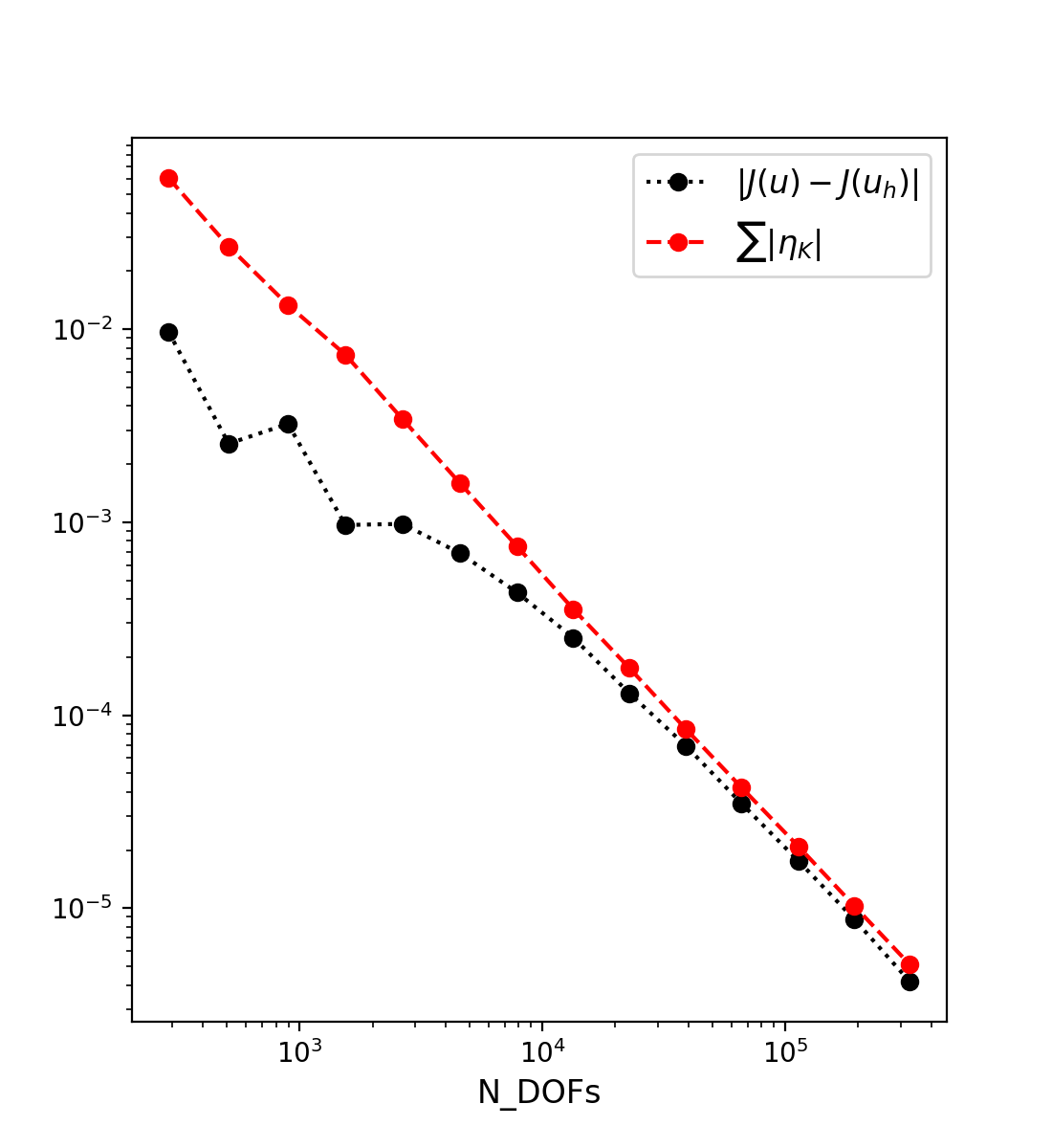}
            \caption{Example 1.}
            \label{fig:sharpness_1}
        \end{subfigure}\hfil 
        \begin{subfigure}{0.45\textwidth}
            \includegraphics[width=\linewidth]{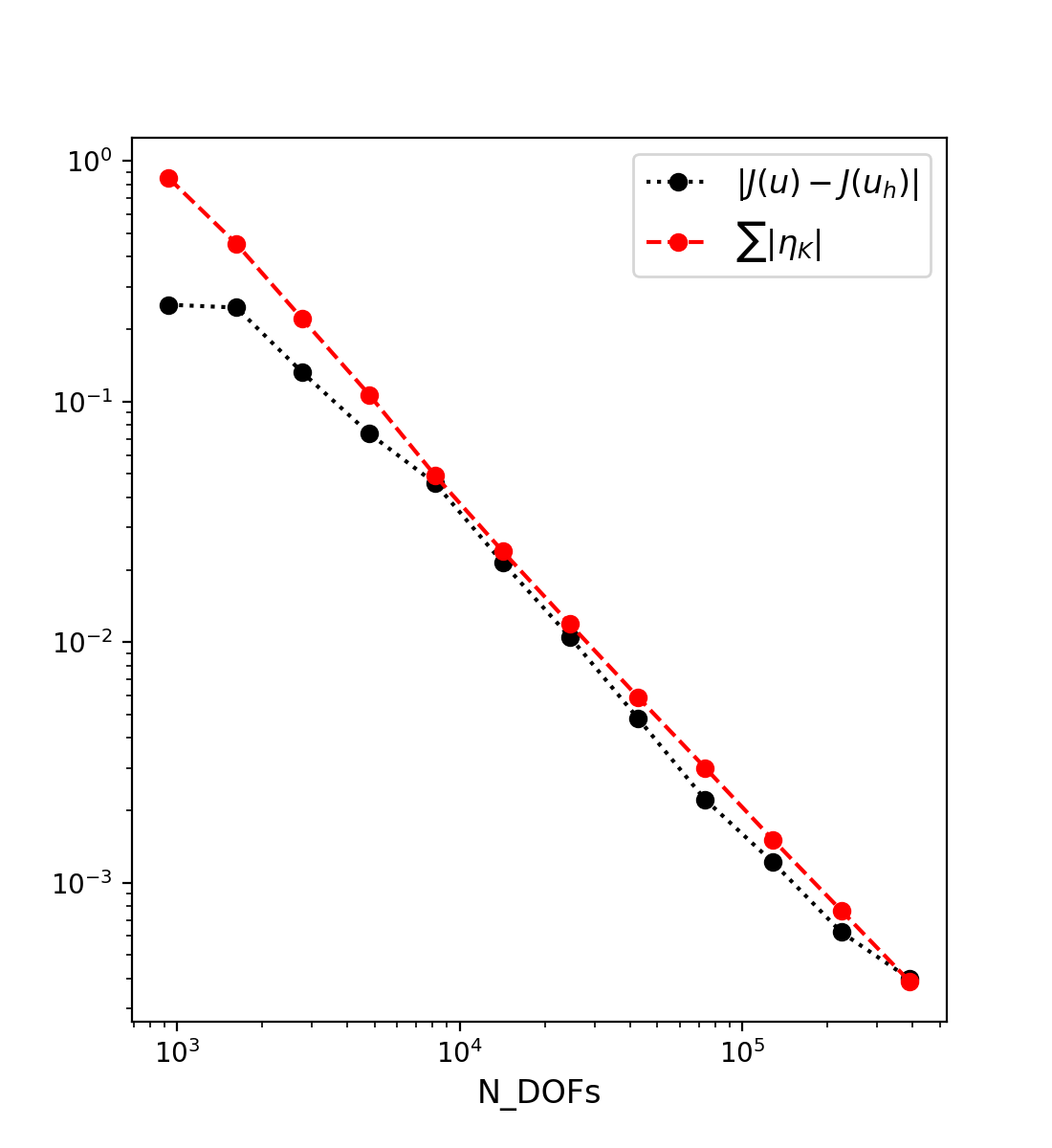}
            \caption{Example 2.}
            \label{fig:sharpness_2}
        \end{subfigure}
        \caption{
            Sharpness of error estimates during adaptive mesh refinement.
            Notice that the dual-weighted estimate significantly
            under-estimates the error for the first few refinement cycles
            but as the simulation progresses the effectivity moves closer to
            one. This is a well known feature of this class of algorithm
            further described in \cite{nochetto2008safeguarded}}
        \label{sharpness}
    \end{figure}
    
    \subsection{Adaptive vs Uniform Comparison}
    
    To illustrate the gains obtained through adaptive refinement, we make
    a comparison between the a uniformly refined simulation and the
    adaptive one. In each case uniform meshes perform extremely poorly
    with small and unpredictable reductions in error where the adaptive
    scheme produces fast and monotonic error reduction on all but the
    coarsest meshes. For comparison, two lines illustrating different
    rates are included in figure \ref{fig:orders_1} illustrating that
    convergence of $J(u_h)$ is suboptimal for uniform meshes, and that in
    terms of degrees of freedom, this optimality can be restored using the
    goal-based estimate.
    
    \begin{figure}[!h]
        \centering 
        \begin{subfigure}{0.45\textwidth}
            \includegraphics[width=\linewidth]{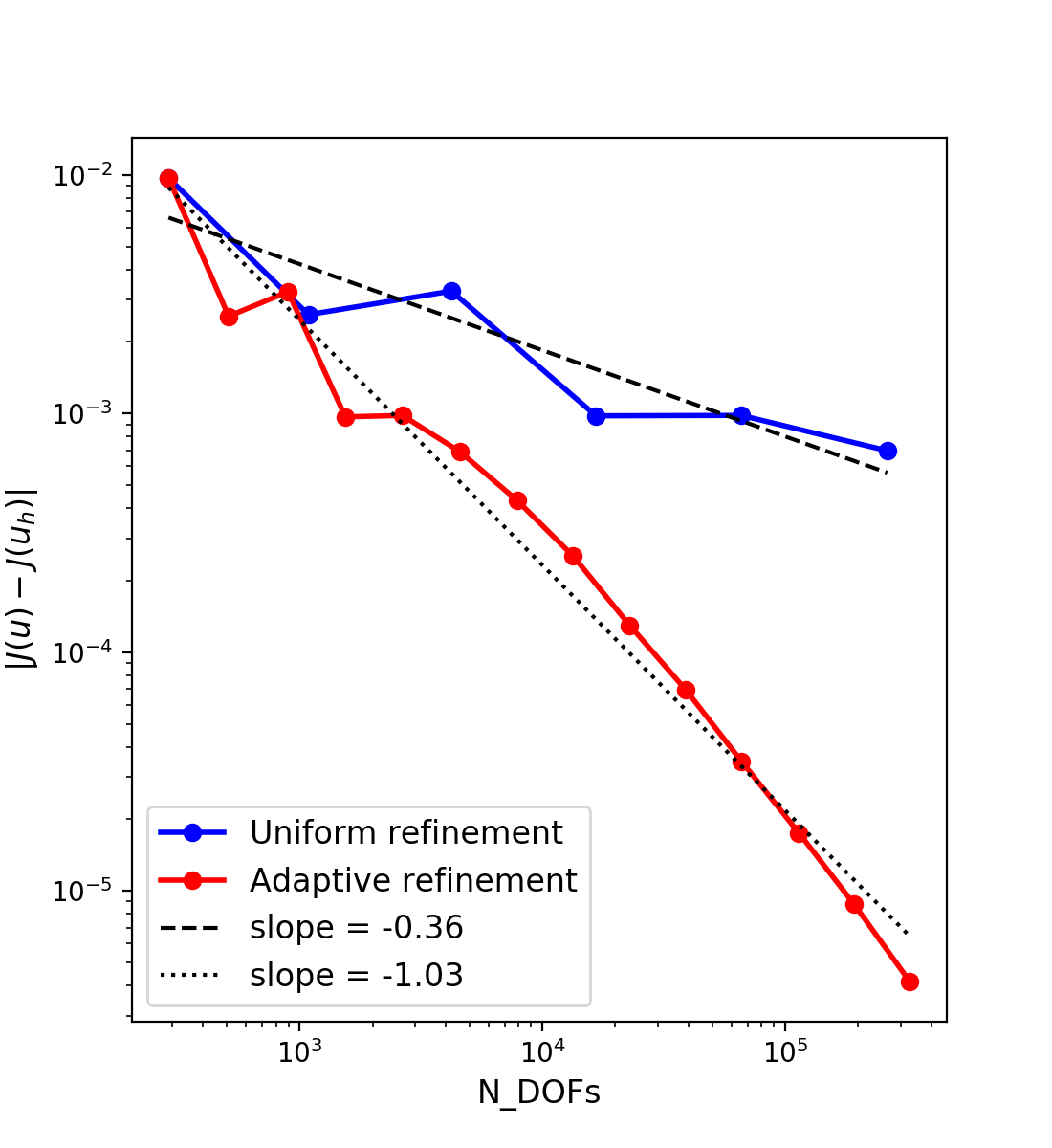}
            \caption{Example 1.}
            \label{fig:orders_1}
        \end{subfigure}\hfil 
        \begin{subfigure}{0.45\textwidth}
            \includegraphics[width=\linewidth]{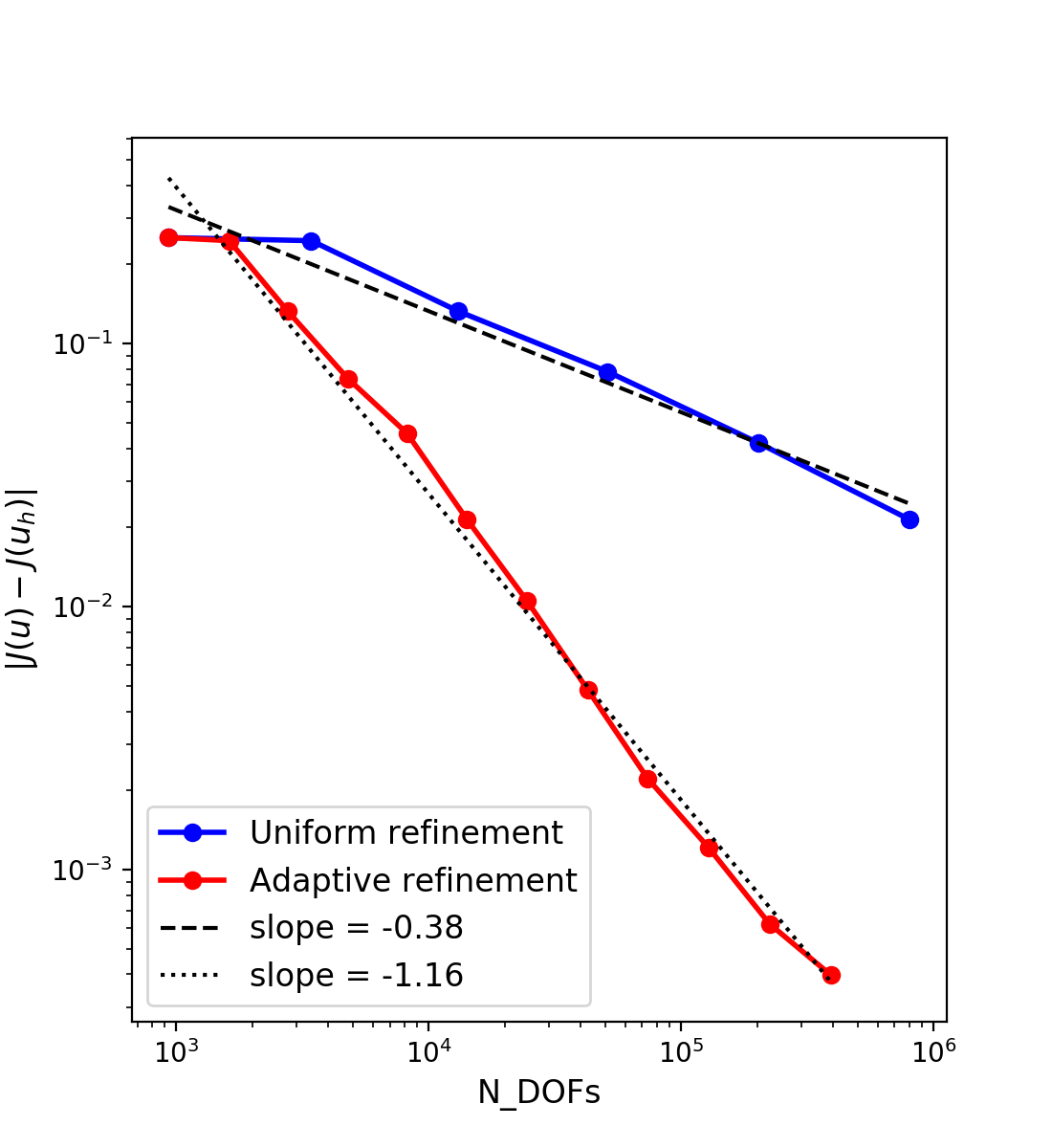}
            \caption{Example 2.}
            \label{fig:orders_2}
        \end{subfigure}
        \caption{Comparison of orders of convergence in terms of number of
            degrees of freedom ($N_{DOFS}$) on uniform and adaptive
            grids. Notice the rate of error reduction is considerably slower
            for uniform simulations in all cases.}
        \label{orders}
    \end{figure}
    
    \clearpage
    
    \section{Case Studies with Layered Inhomogeneities}
    \label{sec:case}
    
    We present results making use of borehole data provided by CPRM
    (Brazilian Geological Survey) by the Siagas
    system \footnote{\url{http://siagasweb.cprm.gov.br/layout/}}. The
    wells are used to supply water to two different cities in S\~ao Paulo
    State, Brazil, one in Ibir\'a and the other in Porto Ferrreira.  Both
    cities are located over the Paran\'a Sedimentary Basin, but in places
    with different shallow geology. There are two different problem setups
    that we consider. In both cases the domain is a vertical section
    illustrated in Figure \ref{ex3:schematic}. We assumed the soil is in
    homogeneous layers, where there is no variation in the physical
    properties in the horizontal direction. The soil parameters used for
    the simulation are given in Table \ref{parameters}.  The water table
    height far from the well is known and applied as a Dirichlet boundary
    condition for the pressure on the right hand lateral boundary. In both
    cases, the height of the water in the well gives the left lateral
    boundary condition, and water is continually pumped out of the well in
    such a way that the water height remains constant.

    We work in cylindrical coordinates with the $(r, \phi, z)$ with the
    $z$-axis aligned with the centre of the well. The aim is to calculate
    the total flux into the well. We therefore use the functional $J_2$
    to account for flux of water over the inner boundary below the water level, 
    defined as follows.
    
    \begin{equation} \label{functional_augmented}
        J_2(u) 
        :=
        2 \pi r_0 \int_{r = r_0}\mathbf{q(u)}\cdot \mathbf{n} \diff z,
    \end{equation}
    where $r_0$ denotes the radius of the well, that is, we integrate over the 
    entire inner wall of the well, 
    above 
    and below the water.

    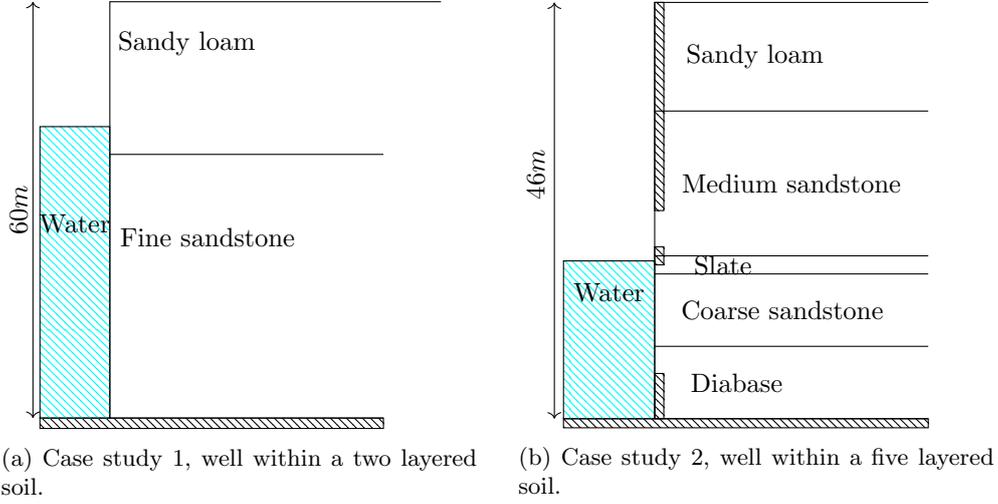
\begin{figure}[h!]
        \begin{center}
            \begin{subfigure}{0.45\textwidth}
                \begin{tikzpicture}[scale=0.12*46/60]
                    \draw [<->] (-11,-60) -- (-11,0);
                    \node[text=black,rotate=90] at (-13,-30) {$60m$};
                    \draw (30*60/38,0) -- (0,0) -- (0,-60) -- (30*60/46,-60); 
                    \draw [pattern=north west lines, pattern color=black] (-10,-60) -- 
                    (30*60/46, -60) -- (30*60/46, -61.5) -- (-10,-61.5) -- cycle;
                    \draw [pattern=north west lines, pattern color=cyan] (-10,-60) -- 
                    (-10, -18) -- (0, -18) -- (0,-60) -- cycle;
                    \node[text=black] at (-5,-32) {Water};
                    \draw (0,-22) -- (30*60/46,-22);
                    \node[text=black] at (11,-6) {Sandy loam};
                    \node[text=black] at (14,-34) {Fine sandstone};
                \end{tikzpicture}
                \caption{Case study 1, well within a two layered soil.}
            \end{subfigure}
            \hfil
            \begin{subfigure}{0.45\textwidth}
                \begin{tikzpicture}[scale=0.12] 
                    \draw [<->] (-11,-46) -- (-11,0);
                    \node[text=black,rotate=90] at (-13,-19) {$46m$};
                    \draw (30,0) -- (0,0) -- (0,-46) -- (30,-46); 
                    \draw [pattern=north west lines, pattern color=black] (-10,-46) -- 
                    (30, -46) -- (30, -47) -- 
                    (-10,-47) -- cycle;
                    \draw [pattern=north west lines, pattern color=cyan] (-10,-46) -- 
                    (-10, -28.56) -- (0, -28.56) -- 
                    (0,-46) -- cycle;
                    \draw [pattern=north west lines, pattern color=black] (0,-46) -- (0, 
                    -41) -- (1, -41) -- 
                    (1,-46) -- cycle;
                    \draw [pattern=north west lines, pattern color=black] (0,-29) -- (0, 
                    -27) -- (1, -27) -- 
                    (1,-29) -- cycle;
                    \draw [pattern=north west lines, pattern color=black] (0,-23) -- (0, 
                    0) -- (1, 0) -- 
                    (1,-23) -- cycle;
                    \node[text=black] at (-5,-32) {Water};
                    \draw (0,-12) -- (30,-12);
                    \node[text=black] at (11,-6) {Sandy loam};
                    \draw (0,-28) -- (30,-28);
                    \node[text=black] at (15,-20) {Medium sandstone};
                    \draw (0,-30) -- (30,-30);
                    \node[text=black] at (7.5,-29) {Slate};
                    \node[text=black] at (14,-34) {Coarse sandstone};
                    \draw (0,-38) -- (30,-38);
                    \node[text=black] at (9,-42) {Diabase};
                \end{tikzpicture}
                \caption{Case study 2, well within a five layered soil.}
                
            \end{subfigure}
        \end{center}
        \caption{
            \label{ex3:schematic}
            Geometric setup of the industrial case study problems. Black
            shading represents an impermeable boundary. In case study two, the
            gaps between impermeable regions on the inner wall of the well are
            the filter locations. The far field boundary conditions are
            analogous to those given in Figure \ref{schematic} and water is
            continually pumped out to maintain constant water height.}
    \end{figure}
    
    \subsection{Case Study 1 - 2 layered well in Ibir\'a (CPRM reference 
    3500023601)}
    
    For these case studies, all lengths are given in metres. In the first
    case we set $\Omega = \{(r,\phi,z) \mid 0.0762 \leq r \leq 50, \, 0
    \leq z \leq 60 \}$. The medium consists of sandy loam for $38 \leq z
    \leq 60$ and fine sandstone for $0 \leq z \leq 38$. We refer to Table
    \ref{parameters} for details of the parametrisations of these
    soils. Again, the base of the well is assumed to consist of impervious
    rock, and a no-flow boundary condition is enforced. There is assumed
    to be no water flow at the land surface. The water table has been
    measured in the vicinity of the well to be 49.8$m$, so we set a
    hydrostatic boundary condition at $r = 50$ to represent the far field
    conditions around the well. The height of water in the well is
    42.7$m$. The initial mesh is aligned with the layers in the soil. The
    solution, together with a selection of adaptive meshes are given in
    Figure \ref{real_well_1}. The computed flux as a function of degrees
    of freedom is given in Figure \ref{fig:well_1} showing that the
    mathematical model is in good agreement with the experimental data.

    \setlength{\tabcolsep}{12pt}
    \begin{table}
        \centering
        \caption{Case study soil parameters. Parameters used in the van
            Genuchten-Mualem model for hydraulic conductivity in each of the
            several types of soil and rock. Note the differences of several
            orders of magnitude in the parameters $K_S^i$, causing strong
            discontinuities in the coefficient $k$.}
        \label{parameters}
        \begin{tabular}[t]{|c|c|c|c|}
            \hline
            Layer &$K_S$ ($m s^{-1}$)& $n$  & $\alpha$ ($m^{-1}$) \\
            \hline
            sandy loam         &  5E-6   &  1.65 & 0.66  \\
            med. sandstone & 9E-6   &  1.36 & 0.012 \\
            slate                      & 5.0E-9  & 6.75 & 0.98 \\
            fine sandstone    & 1.15E-6  & 1.361 & 0.012 \\
            diabase                & 2E-5      & 1.523 & 1.066 \\
            \hline
        \end{tabular}
    \end{table}

    \begin{figure}[!h]
        \centering 
        \begin{subfigure}{0.4\textwidth}
            \includegraphics[width=\linewidth]{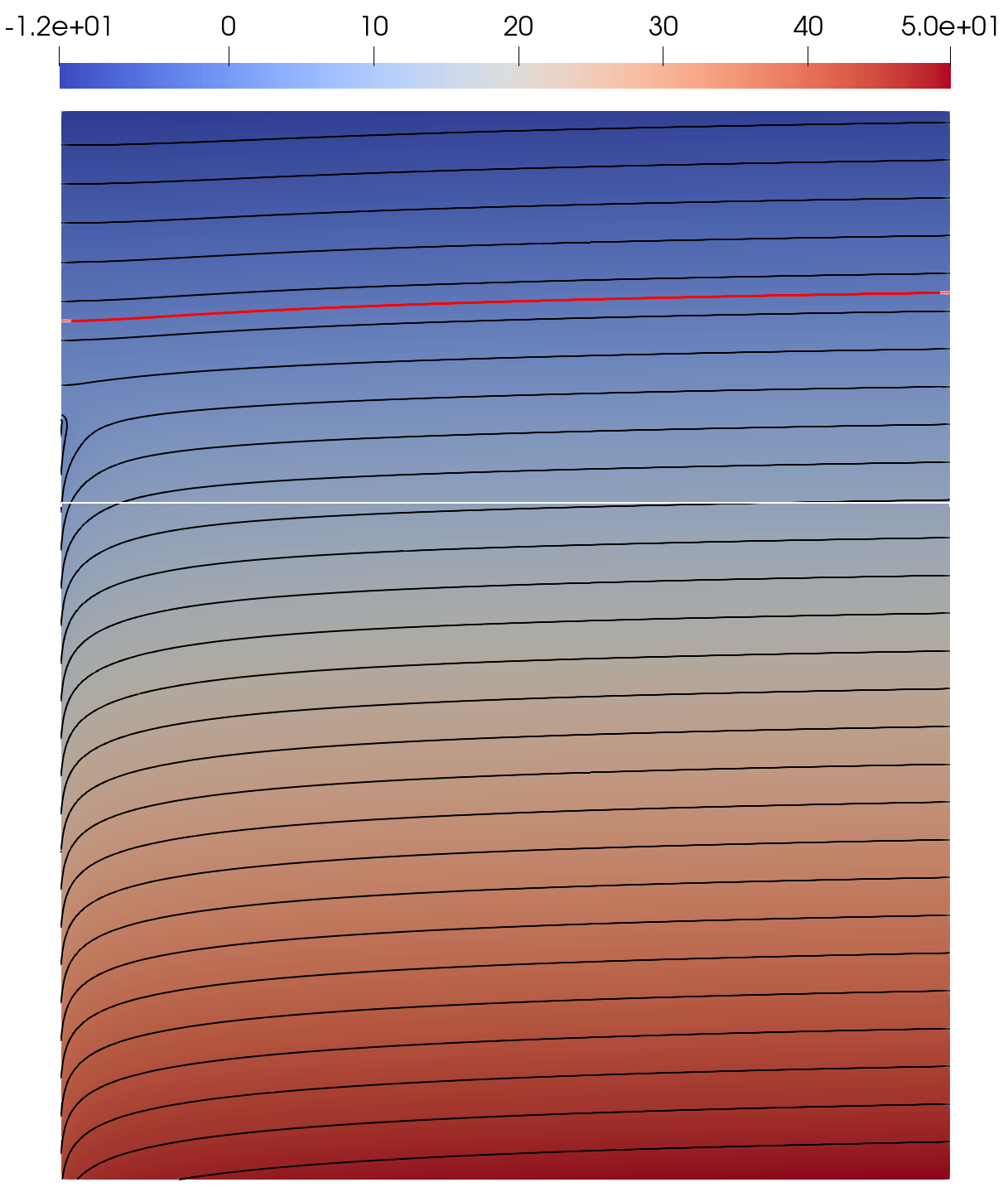}
            \caption{Contours of pressure.}
            \label{well_number_1}
        \end{subfigure}\hfil 
        \begin{subfigure}{0.4\textwidth}
            \includegraphics[width=\linewidth]{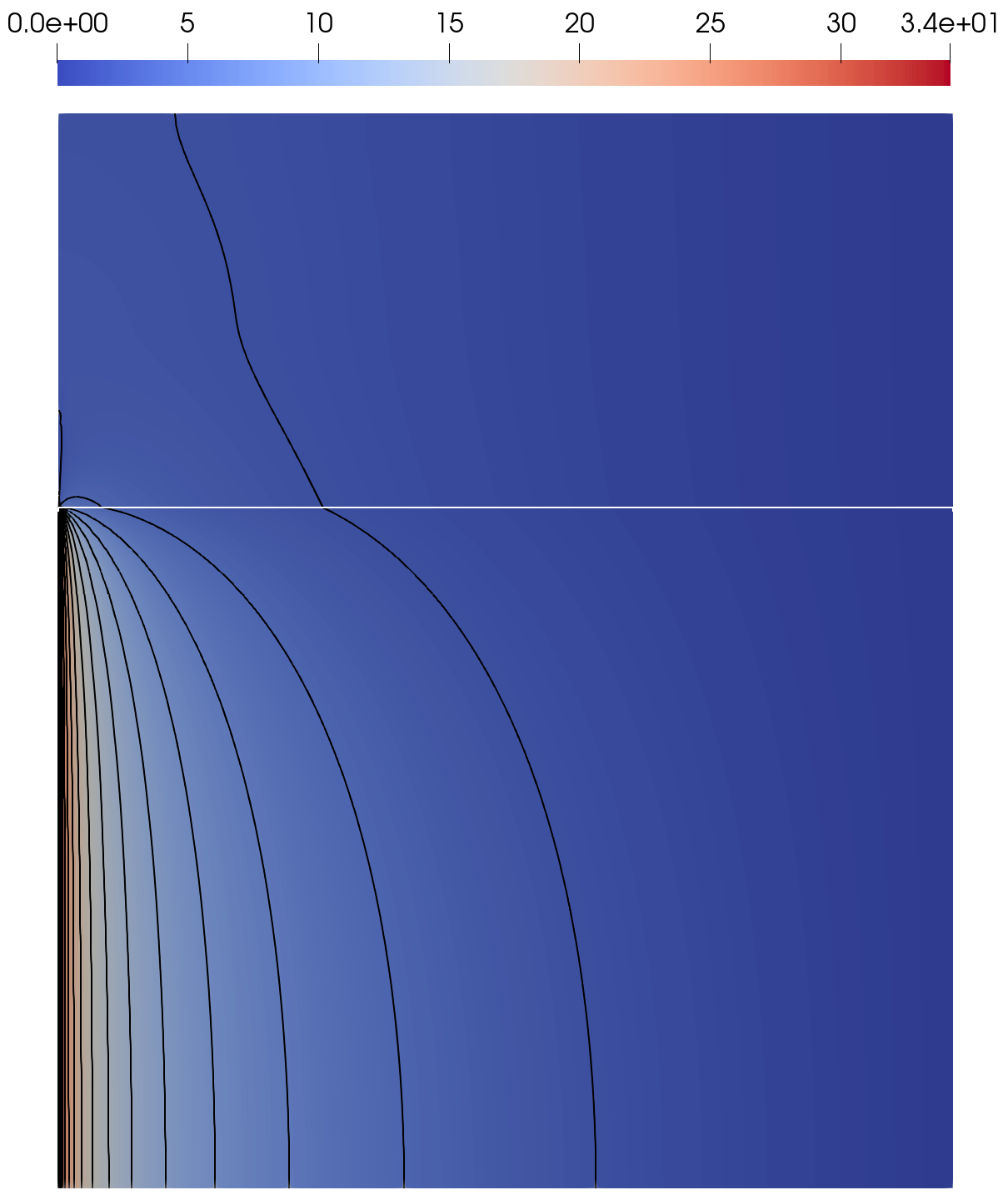}
            \caption{Contours of adjoint pressure, $z_h$.}
            \label{well_dual_number_1}
        \end{subfigure}
        \\ 
        \begin{subfigure}{0.2\textwidth}
            \includegraphics[width=\linewidth]{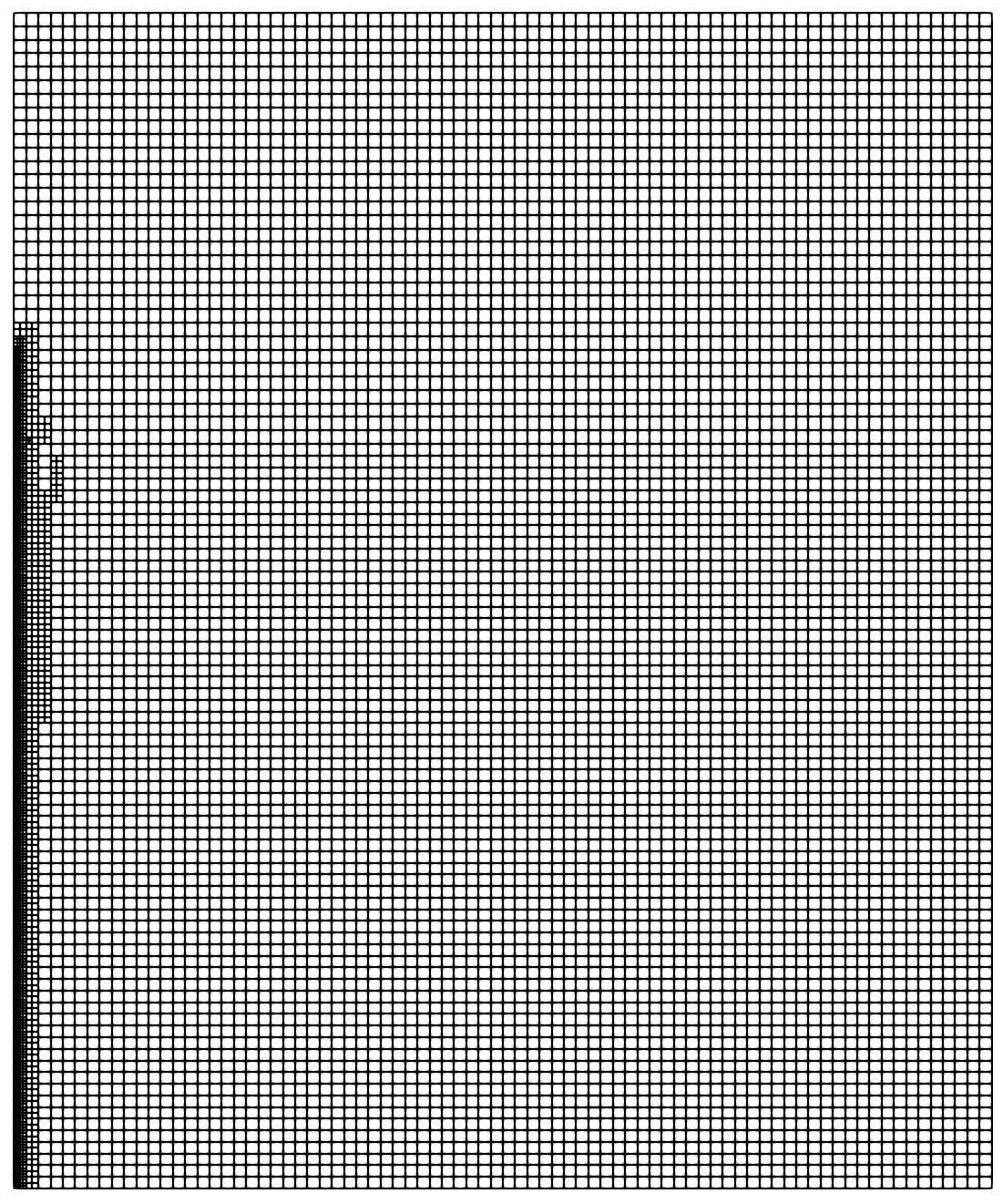}
            \caption{$\mathcal{T}^{15}$}
        \end{subfigure}\hfil 
        \begin{subfigure}{0.2\textwidth}
            \includegraphics[width=\linewidth]{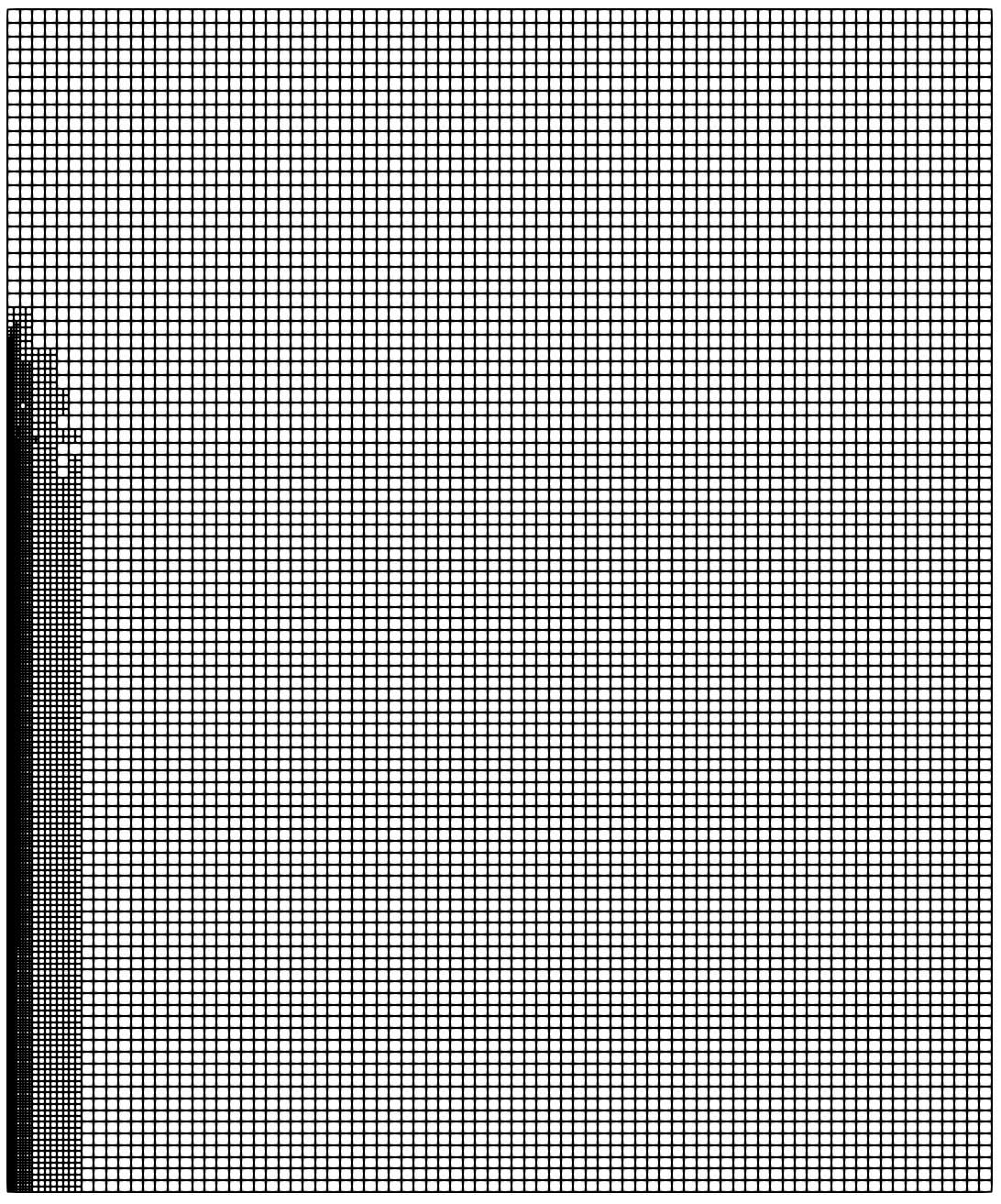}
            \caption{$\mathcal{T}^{20}$}
        \end{subfigure}\hfil 
        \begin{subfigure}{0.2\textwidth}
            \includegraphics[width=\linewidth]{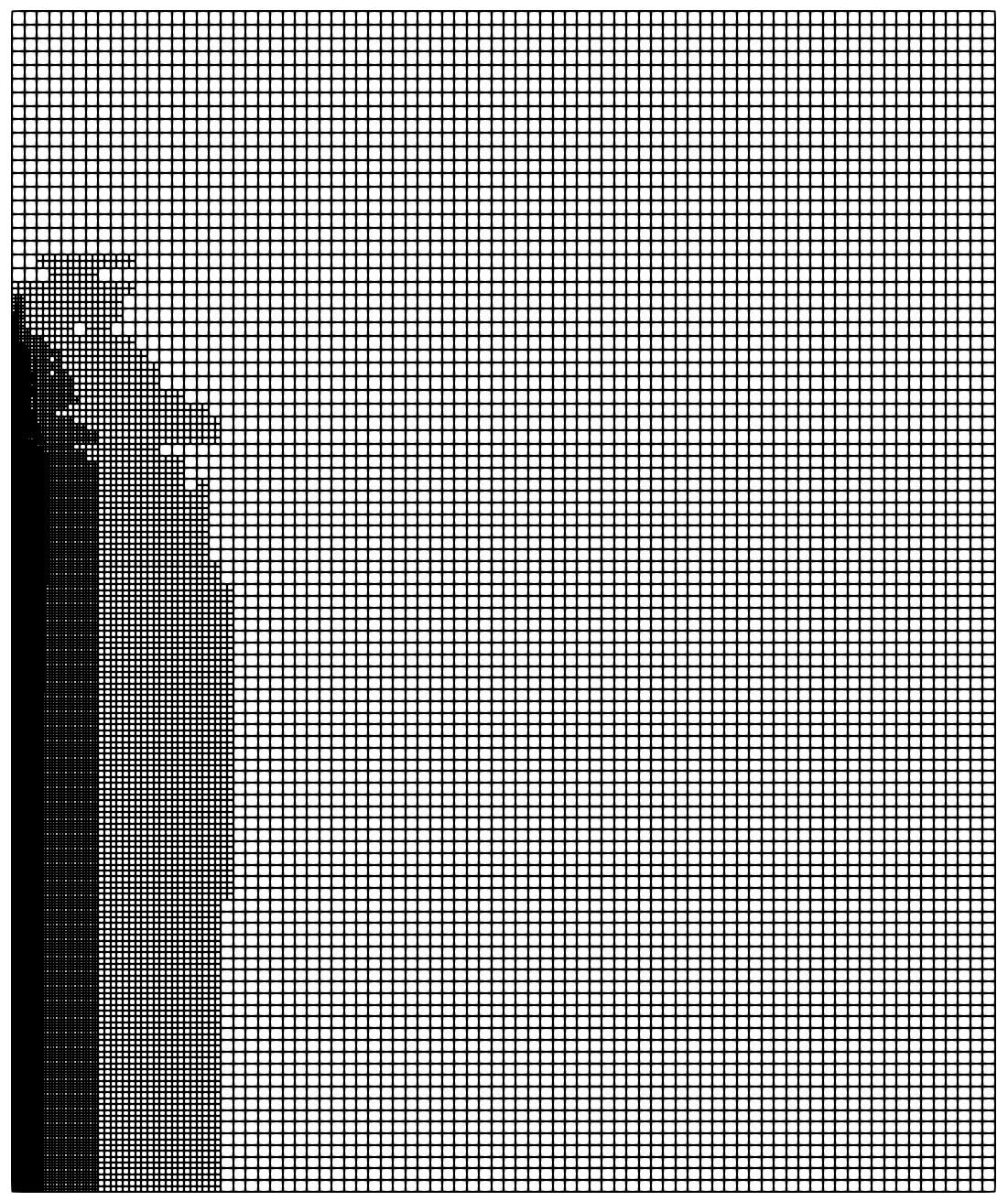}
            \caption{$\mathcal{T}^{30}$}
        \end{subfigure}\hfil 
        \begin{subfigure}{0.2\textwidth}
            \includegraphics[width=\linewidth]{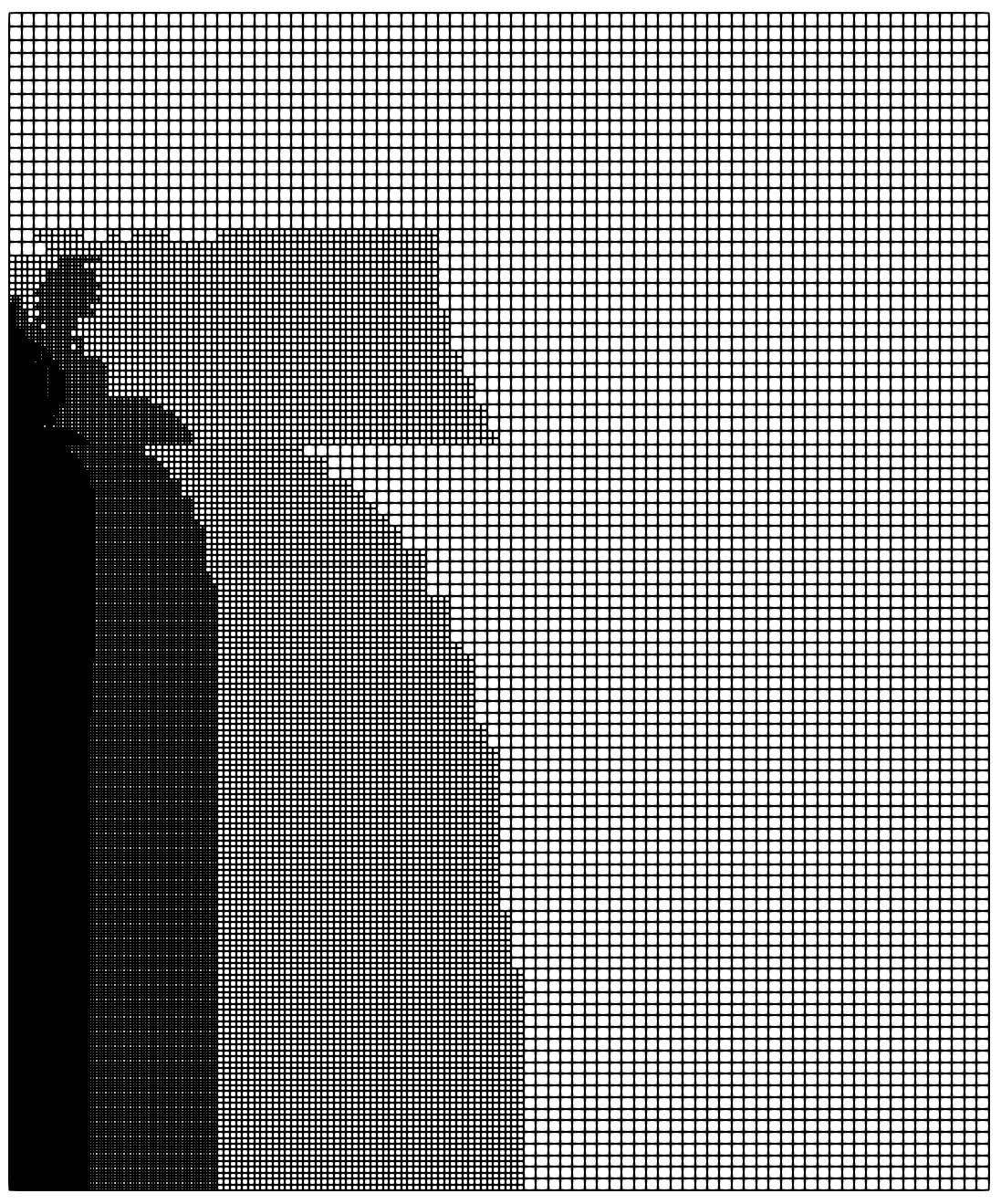}
            \caption{$\mathcal{T}^{35}$}
        \end{subfigure}\hfil 
        \caption{Case study 1, flow through a two layered soil. We show
            the pressure, the adjoint solution and a sample of adaptively
            generated meshes. The boundary between the soil layers is 
            marked with a white line. Both solutions are represented on 
            $\mathcal T^{35}$ which has approximately 1.5 million degrees
            of freedom.
        }
        \label{real_well_1}
    \end{figure}
    
    \subsection{Case study 2 - 5 layered well in Porto Ferreira (CPRM 
    reference 3500009747)}
    The second case study is a challenging setup with five layers of
    highly varying hydraulic properties, as well as complex boundary
    conditions due to the fact that in this case the inner wall of the
    well is impermeable apart from two filters to allow water to flow into
    the well. One is below and one above the water, meaning that the
    former allows flow into the subsurface and the other allows flow
    out. Along the inner wall, filters cover the part of the wall with $5
    \leq z \leq 17$ and $19 \leq z \leq 23$. The water level in the well
    is set at 17.44, with the other boundary conditions as in case study
    1, with the water table at the far boundary set at 33.9. Once again we
    assume a radially symmetric solution. The domain is given by $\Omega =
    \{(r,\phi,z) \mid 0.1585 \leq r \leq 50, \, 0 \leq z \leq 46 \}$. The
    medium consists of five layers. In order, with the top layer first,
    the layers consist of sandy loam, medium sandstone, slate, coarse
    sandstone and diabase. The boundaries between the layers are at
    $z=34$, $z=18$, $z=16$ and $z=8$. We refer to Figure
    \ref{ex3:schematic} for a visual description. The slate layer in
    particular causes this to be a difficult problem to simulate
    numerically due to its hydraulic conductivity being several of orders
    of magnitude smaller than those of the other soils and rocks. The
    initial mesh is aligned with the layers as well as the filter
    locations and the water level in the well. The solution, together with
    a selection of adaptive meshes are given in Figure
    \ref{real_well_4}. The computed flux as a function of degrees of
    freedom is given in Figure \ref{fig:well_4} showing a comparison
    between the mathematical model and the experimental data.

    \begin{figure}[!h]
        \centering 
        \begin{subfigure}{0.45\textwidth}
            \includegraphics[width=\linewidth]{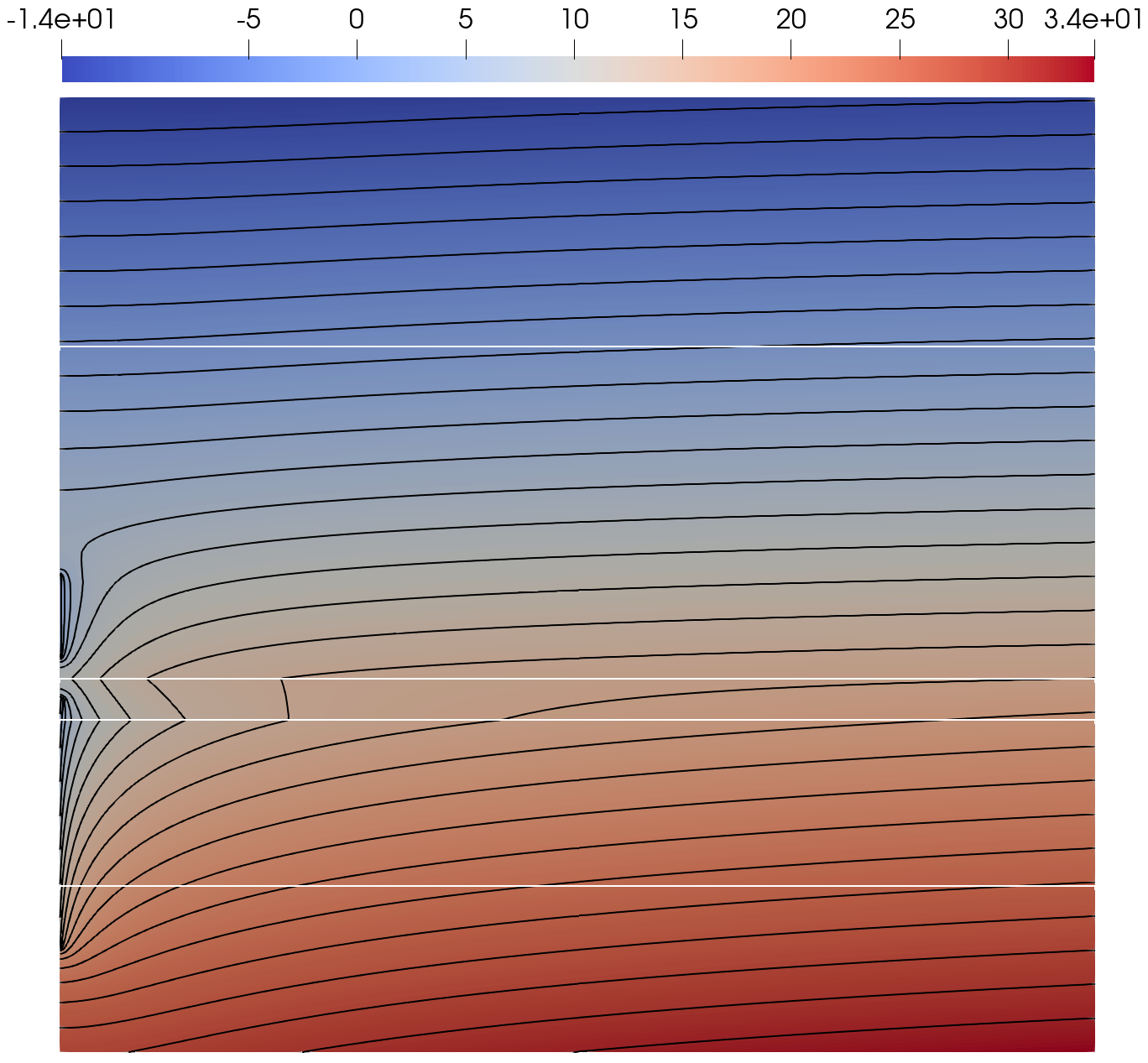}
            \caption{Contours of pressure.}
            \label{well_4}
        \end{subfigure}\hfil 
        \begin{subfigure}{0.45\textwidth}
            \includegraphics[width=\linewidth]{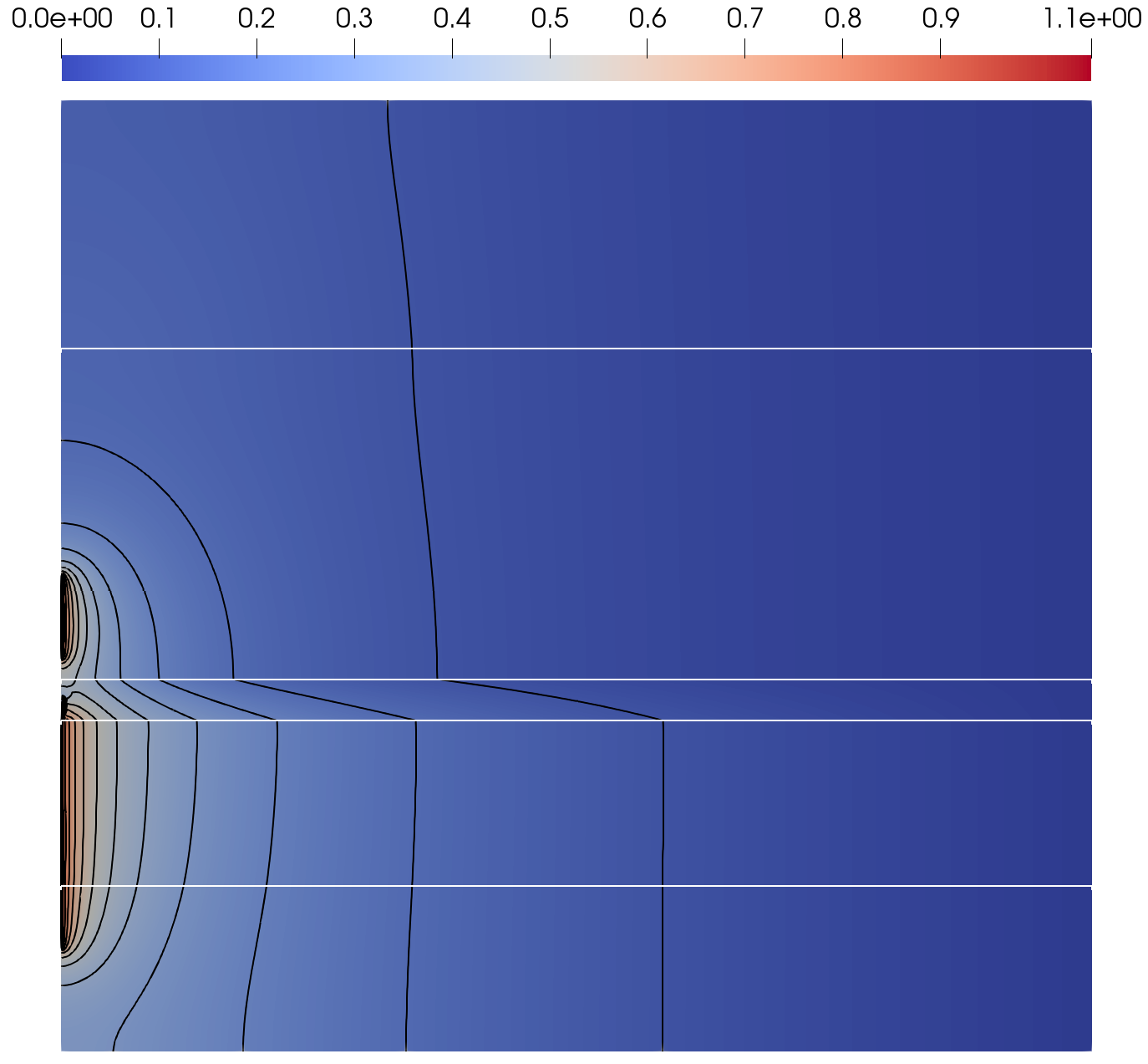}
            \caption{Contours of adjoint pressure, $z_h$.}
            \label{well_dual_4}
        \end{subfigure}\hfil 
        \begin{subfigure}{0.24\textwidth}
            \includegraphics[width=\linewidth]{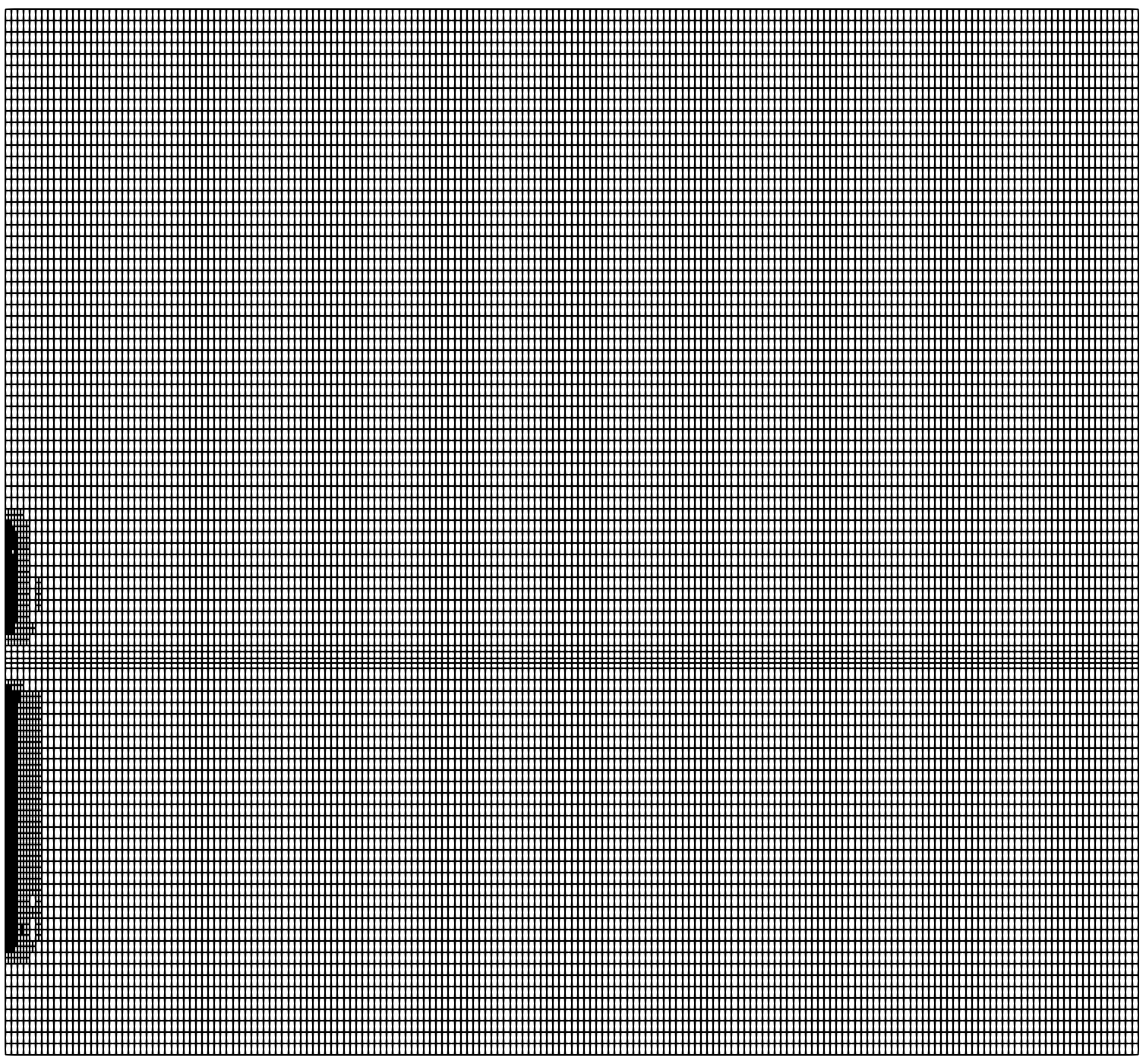}
            \caption{$\mathcal{T}^{20}$}
        \end{subfigure}\hfil 
        \begin{subfigure}{0.24\textwidth}
            \includegraphics[width=\linewidth]{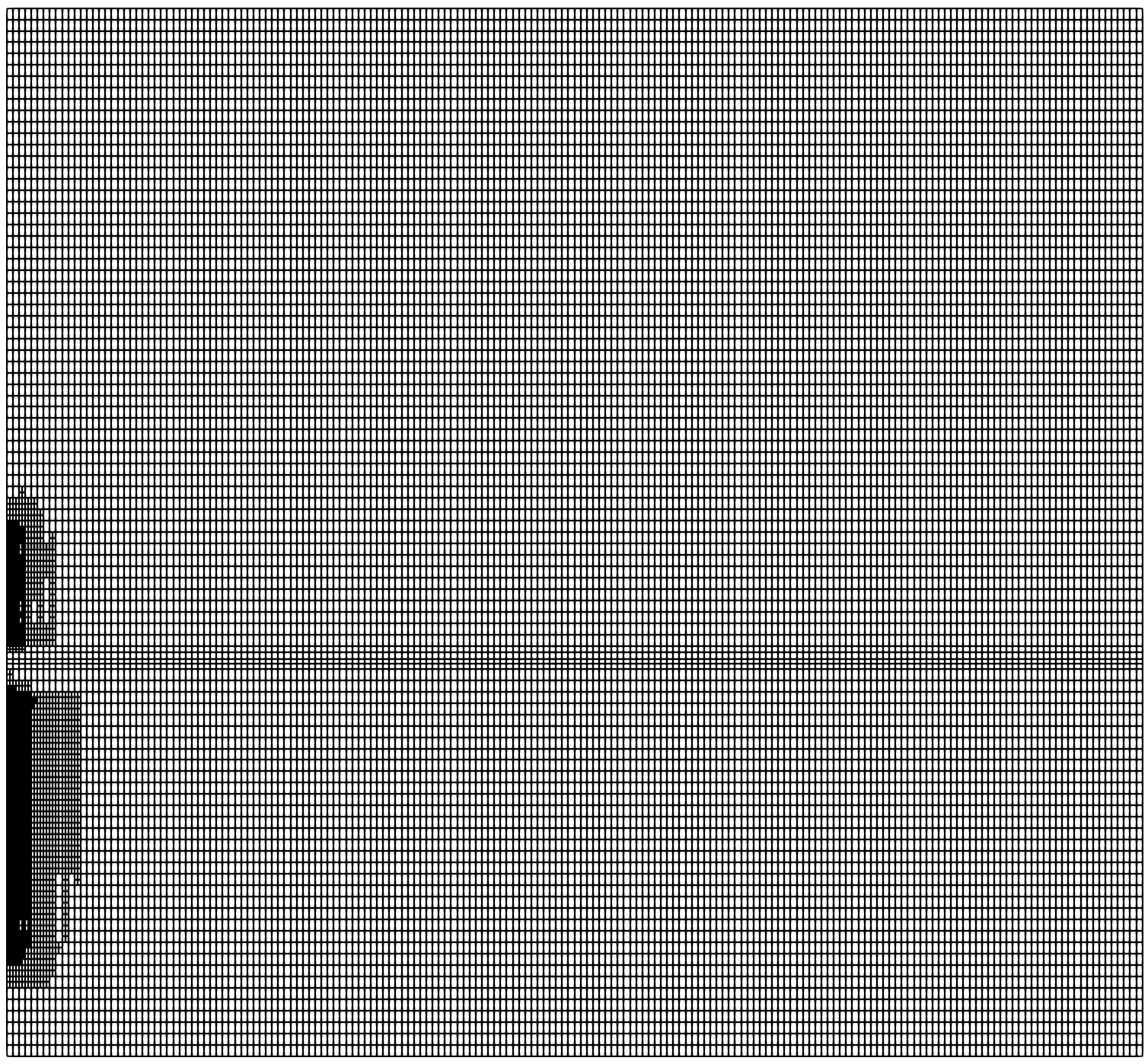}
            \caption{$\mathcal{T}^{25}$}
        \end{subfigure}\hfil 
        \begin{subfigure}{0.24\textwidth}
            \includegraphics[width=\linewidth]{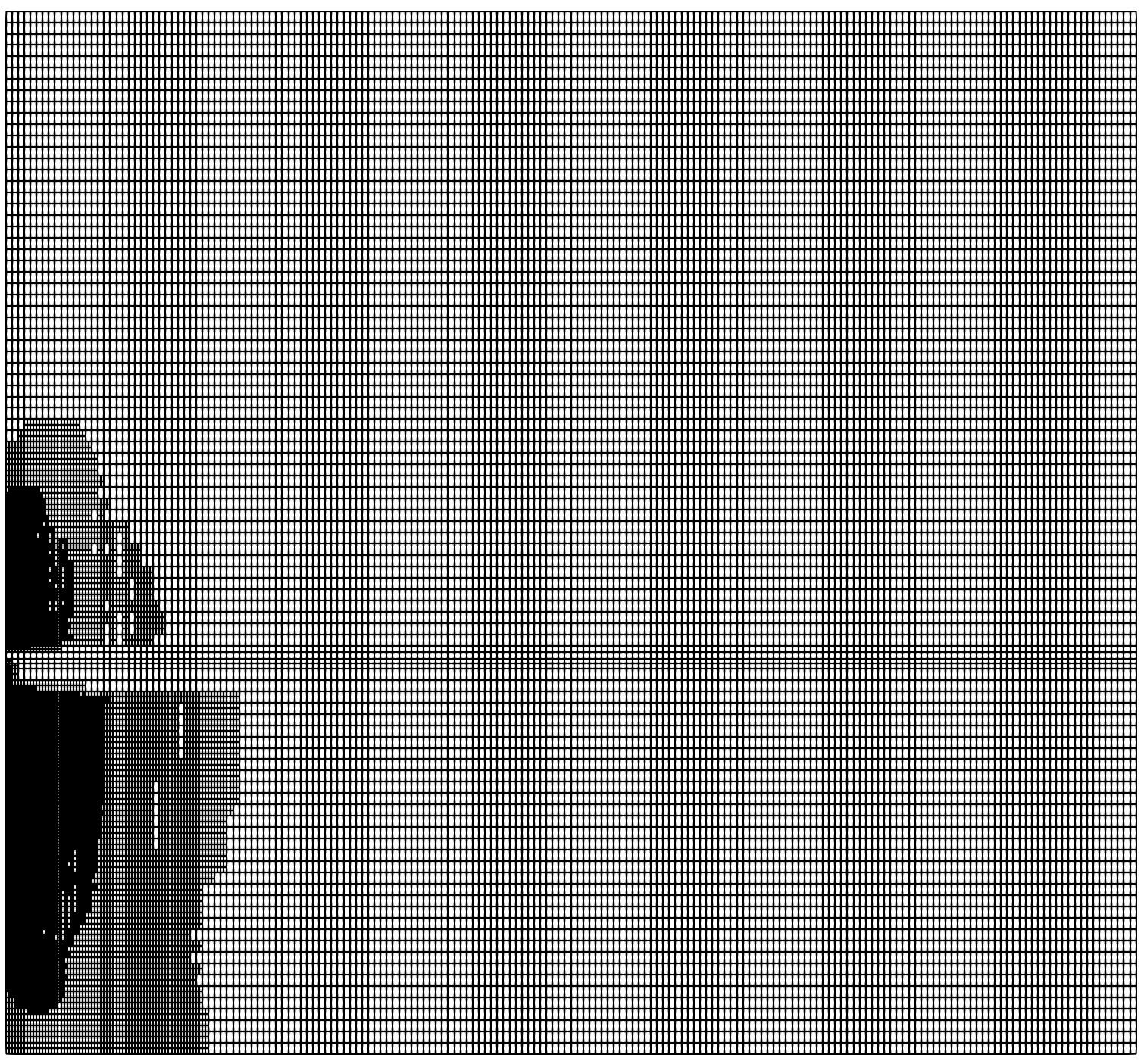}
            \caption{$\mathcal{T}^{35}$}
        \end{subfigure}\hfil 
        \begin{subfigure}{0.24\textwidth}
            \includegraphics[width=\linewidth]{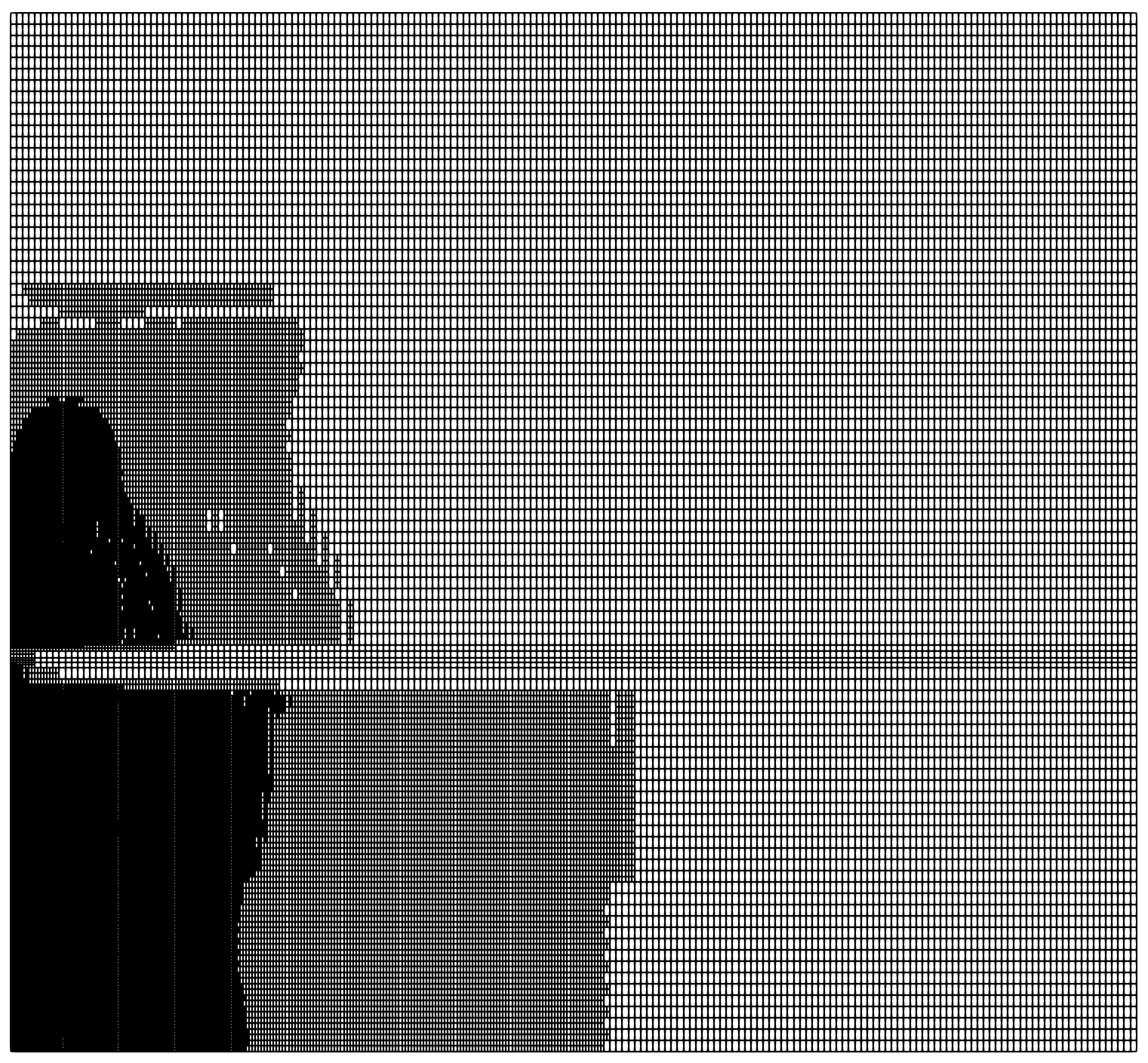}
            \caption{$\mathcal{T}^{45}$}
        \end{subfigure}\hfil 
        \caption{Case study 2, flow through a 5 layer soil. Level set $u =
            0$ marked with red line. The boundaries between the soil layers
            are marked with white lines. See table \ref{parameters} for a
            detailed description of the properties of each layer. Both 
            solutions are represented on  $\mathcal{T}^{45}$ which has 
            approximately 500,000 degrees of freedom. Note that in this
            case the dual problem is much more interesting due to the 
            structure of the inner wall of the well. The meshes appear to 
            show that the soil layers have very different influences on 
            solution accuracy. In particular, the slate layer shows little mesh 
            refinement due to its low permeability relative to the other layers.}
        \label{real_well_4}
    \end{figure}

    \begin{figure}[!h]
        \centering 
        \begin{subfigure}{0.45\textwidth}
            \includegraphics[width=\linewidth]{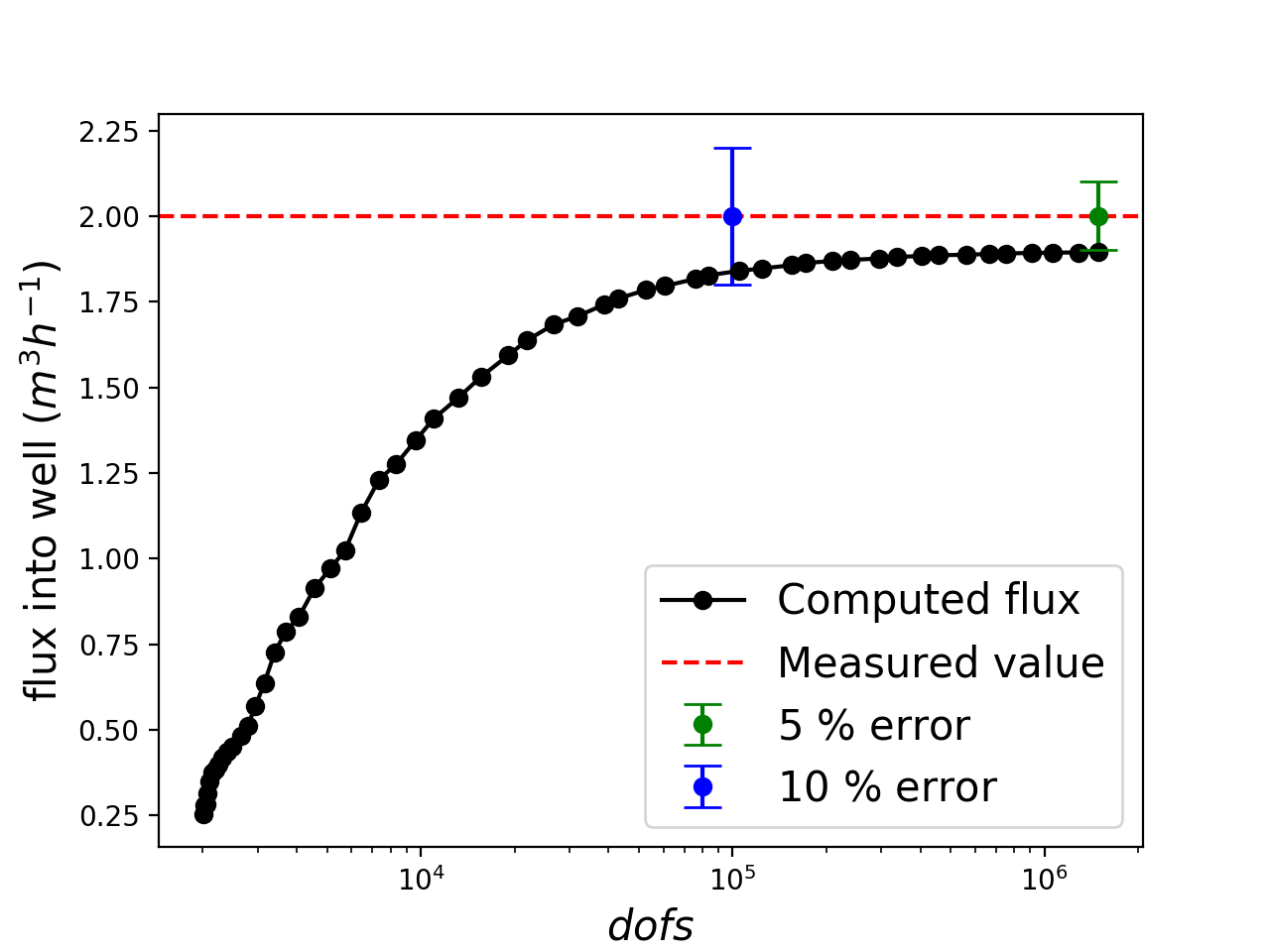}
            \caption{Case study 1. Note that the fully resolved model is
                within a 5\% relative error of the experimental results with
                a 10\% relative error at around $90000$ degrees of
                freedom.}
            \label{fig:well_1}
        \end{subfigure}\hfil 
        \begin{subfigure}{0.45\textwidth}
            \includegraphics[width=\linewidth]{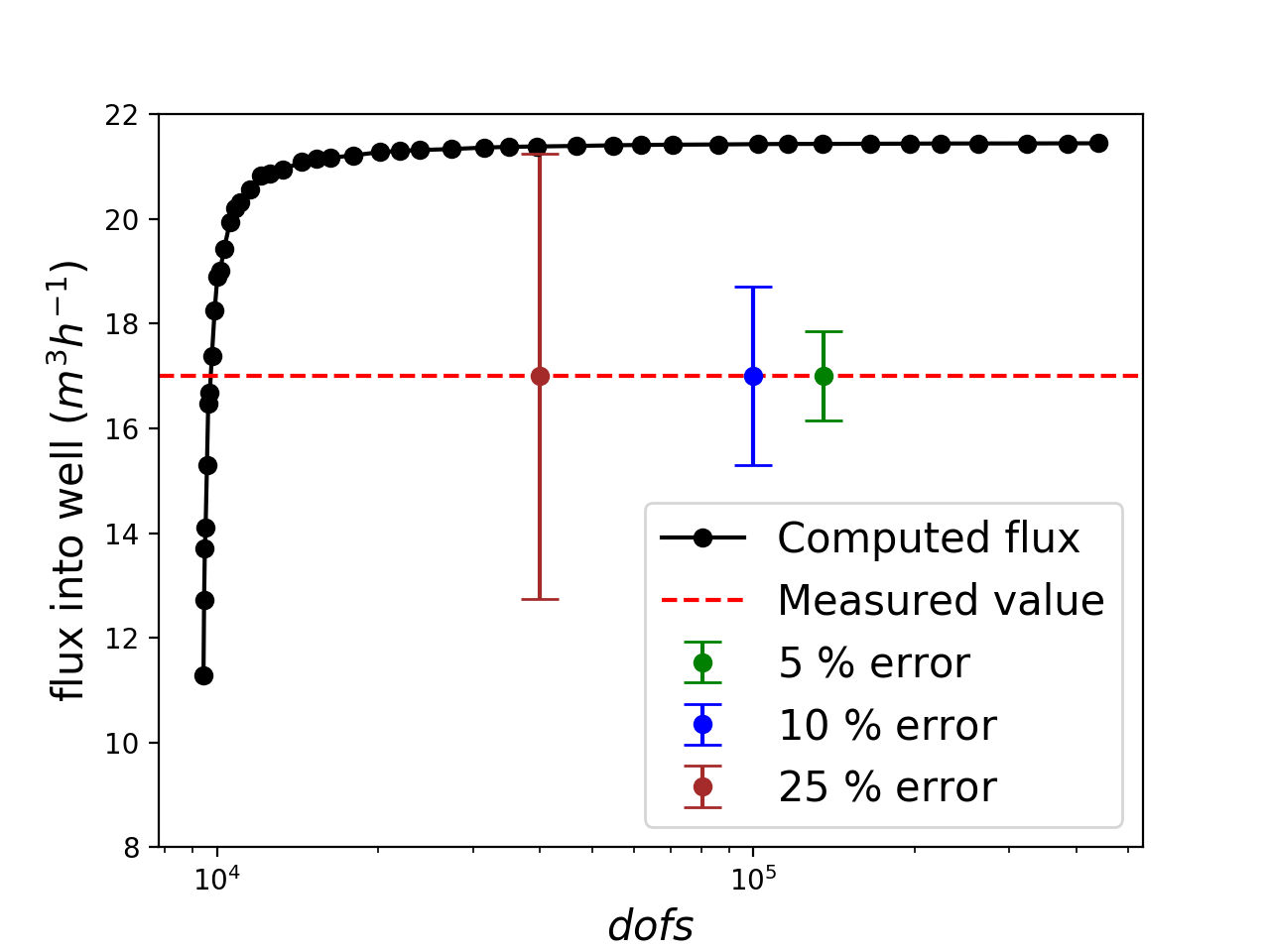}
            \caption{Case study 2. The fully resolved model is around 25\%
                relative error. This is already achieved with $20000$ degrees
                of freedom.
                \phantom{dafuhhadf}
            }
            \label{fig:well_4}
        \end{subfigure}\hfil 
        \caption{Plots displaying the computed value of the water flux
            into the well under successive refinement cycles of the adaptive
            finite element method. This allows to infer the maximal amount
            of water pumped from the well whilst leaving the surrounding
            water table unchanged.}
        \label{wells}
    \end{figure}

    \section{Conclusions \& Discussion} \label{sec:conclusion}
    In this article, we applied techniques from goal-oriented a posteriori
    error estimation to a challenging nonlinear problem involving a groundwater
    flow. For this class of problem, fine uniform meshes do not perform
    well. Indeed, in Figure \ref{orders} we see that convergence can be
    extremely slow on uniform meshes. By comparison, the dual-weighted
    error estimate was shown to perform well under a variety of
    conditions. It has been observed in previous studies (see for example
    \cite{nochetto2008safeguarded}) that due to the approximations that
    must be made to evaluate the error representation numerically, the
    error estimate can perform poorly if the initial mesh in simulations
    is too coarse. In this particular case, we expect that the problem
    originates in the approximation of the dual problem. Since the dual
    solution must satisfy homogeneous Dirichlet boundary conditions on the
    seepage face defined by the primal solution, and since the forcing
    from the quantity of interest is largest here, there is a sharp
    boundary layer at the seepage face which is inevitably poorly resolved
    by a coarse mesh. Notwithstanding, the algorithm produces rapid error
    reduction with effectivity close to 1 once the mesh is sufficiently
    locally refined.  This means that numerical error can be quantified
    with a high degree of confidence, and that the dual-weighted error
    estimate can be used as a termination criterion for an adaptive
    routine.
    
    The case studies most clearly demonstrate the need for adaptive
    techniques in solving problems such as this. The multi-scale nature of
    inhomogeneous soil results in a problem which is extremely challenging
    to solve by conventional numerical methods. Indeed, the error remains
    large on uniform meshes even as the mesh approaches $10^5$ degrees of
    freedom where in the adaptive case a steep and consistent reduction in
    error can be observed with successively refined meshes, see Figure
    \ref{orders}. Applying these robust, computationally efficient methods
    to the case studies allows the accurate quantification of solutions to
    the variational inequality. Note, however, that these case studies are
    still extremely challenging. The assumption of layered soil, for
    example, may not always be physically meaningful. Indeed, we believe
    it is this assumption that affects the performance of case study
    2. For highly variable soils we must use further information, for
    example those provided through resistivity methods. This is the
    subject of ongoing research.

    \section*{Acknowledgement}
    This research was conducted during a visit partially supported through
    the QJMAM Fund for Applied Mathematics.  B.A. was supported through a
    PhD scholarship awarded by the “EPSRC Centre for Doctoral Training in
    the Mathematics of Planet Earth at Imperial College London and the
    University of Reading” EP/L016613/1. T.P. was partially supported
    through the EPSRC grant EP/P000835/1 and the Newton Fund grant
    261865400. CAB acknowledges support from the Conselho Nacional de
    Desenvolvimento Científico e Tecnológico (CNPq) for the Research
    Fellowship Program (grants 300610/2017-3 and 301219/2020-6) and for
    the Research Grant 433481/2018-8.

    \bibliographystyle{unsrt}
    \bibliography{my_collection.bib}

\end{document}